\newcounter{spec}
{\end{list}}
\renewcommand{\P}{{\mathbf P}}
\newcommand{\bH}{\mathbb{H}}
\newcommand{\Z}{{\mathbb Z}}
\newcommand{\Q}{{\mathbb Q}}
\newcommand{\C}{{\mathbb C}}
\newcommand{\oi}{\hskip1mm {\buildrel \simeq \over \rightarrow} \hskip1mm}
\newcommand{\K}{{\mathcal K}}
\newcommand{\Br}{{\operatorname{Br}}}
\newcommand{\NS}{{\operatorname{NS}}}
\newcommand{\Ker}{{\operatorname{Ker}}}
\newcommand{\Coker}{{\operatorname{Coker}}}
\newcommand{\Spec}{{\operatorname{Spec }}}
\newcommand{\cd}{{\operatorname{cd}}}
\renewcommand{\lim}{\varprojlim}
\newcommand{\colim}{\varinjlim}
\numberwithin{equation}{section}
\newfont{\gothic}{eufb10}
\newtheorem{theo}{Th\'{e}or\`{e}me}[section]
\newtheorem{prop}[theo]{Proposition}
\newtheorem{lem}[theo]{Lemme}
\newtheorem{cor}[theo]{Corollaire}
\theoremstyle{definition}
\newtheorem{defi}[theo]{D\'efinition}
\theoremstyle{remark}
\newtheorem{rema}[theo]{Remarque}
\newtheorem{remas}[theo]{Remarques}
\newtheorem{exemple}[theo]{Exemple}
\newcommand{\bthe}{\begin{theo}}
\newcommand{\ble}{\begin{lem}}
\newcommand{\bpr}{\begin{prop}}
\newcommand{\bco}{\begin{cor}}
\newcommand{\bde}{\begin{defi}}
\newcommand{\ethe}{\end{theo}}
\newcommand{\ele}{\end{lem}}
\newcommand{\epr}{\end{prop}}
\newcommand{\eco}{\end{cor}}
\newcommand{\ede}{\end{defi}}
\newcommand{ \Gal}{{\rm{Gal}}}
\newcommand{\et}{{\operatorname{\acute{e}t}}}
\newcommand{\Pic}{\operatorname{Pic}}
\newcommand{\FF}{{\overline F}}
\newcommand{\X}{{\overline X}}
\DeclareFontFamily{U}{wncy}{}
\DeclareFontShape{U}{wncy}{m}{n}{%
<5>wncyr5%
<6>wncyr6%
<7>wncyr7%
<8>wncyr8%
<9>wncyr9%
<10>wncyr10%
<11>wncyr10%
<12>wncyr6%
<14>wncyr7%
<17>wncyr8%
<20>wncyr10%
<25>wncyr10}{}
\DeclareMathAlphabet{\cyr}{U}{wncy}{m}{n}
\begin{document}

\title[Descente galoisienne sur le second groupe de Chow]{Descente galoisienne sur le second groupe de Chow : mise au point et applications}
\author{Jean-Louis Colliot-Th\'el\`ene}
\address{C.N.R.S., Universit\'e Paris Sud\\Math\'ematiques, B\^atiment 425\\91405 Orsay Cedex\\France}
\email{jlct@math.u-psud.fr}

\date{24 mai 2015}

\maketitle

\begin{abstract}
Le troisi\`eme groupe de cohomologie \'etale  non ramifi\'e d'une vari\'et\'e projective et lisse,
\`a coefficients dans  les racines de l'unit\'e tordues deux fois, intervient dans plusieurs articles
r\'ecents, en particulier en relation avec le groupe de Chow de codimension 2.
Des r\'esultats g\'en\'eraux ont \'et\'e obtenus \`a ce sujet par B. Kahn en 1996.
De r\'ecents travaux,   du c\^ot\'e des groupes alg\'ebriques lin\'eaires  d'une part,
 du c\^ot\'e de la g\'eom\'etrie alg\'ebrique complexe d'autre part,
  m'incitent  \`a les passer en revue, et \`a les sp\'ecialiser 
aux vari\'et\'es  proches d'\^etre  rationnelles. 
 \end{abstract}

\begin{altabstract}
Connexions between the second Chow group of a smooth projective variety and its
third unramified cohomology group, with coefficients the roots of unity twisted twice,
feature in several recent works. In this note we revisit  a 1996 paper by B. Kahn
and specialize it to 
 nearly rational varieties. 
   \end{altabstract}

Dans tout cet article, on note 
  $F$ un corps de caract\'eristique z\'ero, $\FF$ une cl\^oture alg\'ebrique de $F$ et  $G=\Gal(\FF/F)$.
Soit  $X$ une $F$-vari\'et\'e lisse et g\'eom\'etriquement int\`egre. On note $\X=X\times_{F}{\FF}$.
On note $F(X)$ le corps des fonctions rationnelles de $X$ et $\overline{F}(X)$ le corps des fonctions
rationnelles de $\X$.

L'application naturelle entre groupes de Chow de codimension 2 
$$ CH^2(X) \to CH^2({\overline X})^G$$
n'est en g\'en\'eral ni injective ni surjective, m\^{e}me si l'on suppose 
que $X$ est projective et
que l'ensemble $X(F)$ des points rationnels de $X$ est non vide
-- \`a la diff\'erence du r\'esultat bien connu de $CH^1(X)$.

Plusieurs travaux 
ont \'et\'e consacr\'es \`a l'\'etude des noyau et conoyau de
cette application
 et aux liens entre  le groupe de Chow de codimension deux et le  troisi\`eme groupe
de cohomologie non ramifi\'ee  de $X$ \`a valeurs dans $\Q/\Z(2)$, groupe not\'e
$H^3_{nr} (X,\Q/\Z(2))$.
Citons en particulier
 \cite{CTHilb},  Raskind et l'auteur \cite{CTRaskind},
Lichtenbaum \cite{Lichtenbaum}, Kahn \cite{ Kahn0, Kahn}, C.~Voisin et l'auteur
\cite{CTVoisin}, Pirutka \cite{Pirutka}, Kahn et l'auteur \cite{CTKahn},
Merkurjev \cite{BM, merk2, merk3, merk5}, Voisin \cite{voisin}.

Une des raisons de s'int\'eresser au groupe 
 $H^3_{nr} (X,\Q/\Z(2))$ est que c'est un invariant $F$-birationnel
 des $F$-vari\'et\'es projectives et lisses, r\'eduit \`a $H^3(F,\Q/\Z(2))$
 si la $F$-vari\'et\'e $X$ est $F$-birationnelle \`a un espace projectif.

\smallskip

Le r\'esultat principal du pr\'esent article est le
 Th\'eor\`eme \ref{BMgeneralise}, qui s'applique \`a toute vari\'et\'e projective et lisse
 g\'eom\'etriquement rationnellement connexe, et qui dans le cas particulier
 des vari\'et\'es g\'eom\'etriquement rationnelles 
   \'etablit (Corollaire \ref{corprinc})
  une suite exacte  
  $$ \Ker[CH^2(X) \to CH^2({\overline X})^G] \to H^1(G,\Pic({\overline X}) \otimes {\FF}^{\times}) \to  \hskip3cm$$$$ 
   H^3_{nr} (X,\Q/\Z(2))/H^3(F,\Q/\Z(2)) 
 \to  \Coker[CH^2(X) \to CH^2({\overline X})^G]  \to $$ $$ \hskip8cm H^2(G,\Pic({\overline X}) \otimes {\FF}^{\times})$$
  sous l'une des deux hypoth\`eses suppl\'ementaires : 
 
 (i) La $F$-vari\'et\'e $X$ poss\`ede un $F$-point.
 
 (ii) La dimension cohomologique de $F$ est au plus 3.

 \medskip

D\'ecrivons la structure de l'article.

\smallskip
 
 Le \S  \ref{rappels} est consacr\'e \`a des rappels  de r\'esultats fondamentaux 
 sur la $\mathcal{K}$-cohomologie, la cohomologie non ramifi\'ee et la cohomologie
 motivique. On y rappelle aussi  (Prop.  \ref{CTKahnsuite})  un r\'esultat de \cite{CTKahn}
 apportant une correction \`a \cite{Kahn}.

\smallskip

 Au \S  \ref{uniquedivis}, sous l'hypoth\`ese que le groupe $H^0(\overline{X}, \mathcal{K}_{2})$  est uniquement divisible,
 on \'etablit par deux m\'ethodes diff\'erentes (l'une $K$-th\'eorique,
 l'autre motivique)
 une suite exacte g\'en\'erale (Propositions  \ref{longuesuitegenerale1}
et  \ref{longuesuitegenerale2}).
  On suppose ici la vari\'et\'e  $X$ lisse et g\'eom\'etriquement int\`egre, mais non n\'ecessairement propre.
Ceci s'applique en particulier aux espaces classifiants de groupes semisimples consid\'er\'es par Merkurjev
\cite{merk2}.

La premi\`ere m\'ethode, \`a l'ancienne, via la $K$-cohomologie,
est celle des articles \cite{CTRaskind}, \cite{CTVoisin}.
La seconde m\'ethode fait usage des groupes de cohomologie motivique $\Z(2)$, comme
dans l'article \cite{Kahn} de Bruno Kahn. De ce point de vue, on ne fait que
g\'en\'eraliser \cite[Thm. 1, Corollaire]{Kahn},   
avec la correction mentionn\'ee ci-dessus.
Lorsque le corps de base est  de
 dimension cohomologique au plus 1, auquel cas
la correction n'est pas utile, et lorsque de plus les vari\'et\'es consid\'er\'ees sont projectives,
ces suites exactes ont d\'ej\`a \'et\'e  utilis\'ees dans \cite{CTVoisin} et \cite{CTKahn}.

\smallskip

 Au \S  \ref{H1K2}, pour $X$ projective et lisse, on donne des conditions permettant de contr\^oler le groupe
 $H^1(\overline{X}, \mathcal{K}_{2})$ apparaissant dans les suites exactes du \S \ref{uniquedivis}.
 On donne une application aux surfaces $K3$ d\'efinies sur $\C((t))$.
 
\smallskip

 Au  \S \ref{principal}, on combine  les r\'esultats des  paragraphes pr\'ec\'edents 
 pour \'etablir les r\'esultats principaux  de l'article, le th\'eor\`eme  \ref{BMgeneralise}
 et son corollaire \ref{corprinc} cit\'e ci-dessus.

\smallskip

 Au \S  \ref{hypcubiques}, on applique les r\'esultats du  \S \ref{principal} aux hypersurfaces de Fano
 complexes.
 Pour $X \subset \P^n_{\C}$ hypersurface lisse de degr\'e $d \leq n$
    et
$F$ corps quelconque contenant $\C$, on \'etablit  $H^3_{nr}(X_{F}, \Q/\Z(2))=H^3(F,\Q/\Z(2))$  dans chacun des
 cas suivants :
 pour $n>5$ ; pour $n=5$ sous r\'eserve que l'on ait $H^3_{nr}(X, \Q/\Z(2))=0$;   pour $n=4$
 lorsqu'il existe un cycle universel de codimension 2.
On fait le lien avec
les
r\'esultats de   Auel, Parimala et l'auteur
 \cite{ACTP} et de C. Voisin \cite{voisindecdiag, voisin} sur les hypersurfaces cubiques
 et sur les cycles universels de codimension 2, r\'esultats sur lesquels on donne un nouvel \'eclairage
 -- la $K$-th\'eorie alg\'ebrique rempla\c cant certains   arguments de g\'eom\'etrie complexe (voir la
 d\'emonstration du th\'eor\`eme  \ref{h3univtrivial}).

\medskip

Par rapport \`a la premi\`ere version de cet article, mise sur arXiv en f\'evrier 2013,
cet article diff\`ere essentiellement par le contenu du pr\'esent~\S 5, motiv\'e par le travail
  \cite{ACTP}  et  par les articles \cite{voisindecdiag, voisin} de C.~Voisin.
  
  \medskip
  
  Terminons cette introduction en indiquant ce qui  n'est pas fait  dans cet article.
  
  (i) Je n'ai pas v\'erifi\'e que les arguments dans la litt\'erature utilisant les complexes
  $\Z(2)$ de Voevodsky sont compatibles avec ceux utilisant le complexe   $\Gamma(2)$ de
  Lichtenbaum ou avec ceux utilisant les groupes de cycles sup\'erieurs  de Bloch,
  dont il est fait usage dans \cite{CTKahn}. Et je n'ai pas v\'erifi\'e que dans les suites exactes
  des Propositions  \ref{longuesuitegenerale1} et  \ref{longuesuitegenerale2}, dont les
  termes sont identiques,  les fl\`eches aussi co\"{\i}ncident.
  Ceci n'affecte pas les principaux r\'esultats
  de l'article. Le lecteur v\'erifiera en effet que la Proposition \ref{longuesuitegenerale1},
  \'etablie par des m\'ethodes \`a l'ancienne via la Proposition \ref{CTKahnsuite},
 suffit \`a \'etablir tous les r\'esultats des paragraphes \ref{H1K2}, \ref{principal}, \ref{petitmotif},  \`a l'exception 
 du lemme  \ref{restricdroite} (ii), du th\'eor\`eme \ref{hypersurface} (viii) et de l'assertion de
 surjectivit\'e  de l'application $CH^2(X_{F}) \to CH^2(X_{\overline F})^G$ dans le th\'eor\`eme \ref{h3nrcubiqueC} (iii).

  (ii)  Les   longues suites  exactes des Propositions \ref{longuesuitegenerale1} 
  et  \ref{longuesuitegenerale2},
  le th\'eor\`eme  \ref{BMgeneralise}  et le corollaire \ref{corprinc} devraient 
  se sp\'ecialiser en un certain nombre des longues suites exactes  pour les vari\'et\'es classifiantes 
  de groupes alg\'ebriques lin\'eaires connexes \'etablies
  par Blinstein-Merkurjev  \cite{BM}  et par Merkurjev  \cite{merk2, merk3}. Je me suis content\'e d'allusions \`a ces
 articles en divers points du texte.

  (iii) Sur  un corps de base de caract\'eristique positive,
l'utilisation de la cohomologie de Hodge-Witt logarithmique permet de donner
des analogues de certains des r\'esultats du pr\'esent travail.
Nous renvoyons pour cela aux articles \cite{Kahn} et  \cite{CTKahn}.

\medskip

{\it Remerciements.} Cet article fait suite \`a des travaux et discussions avec 
  Bruno Kahn, et \`a des travaux de
  A. Merkurjev et de  C.~Voisin. Je remercie le rapporteur pour sa  lecture
critique du tapuscrit.

 \section{Rappels, propri\'et\'es g\'en\'erales}\label{rappels}
 
 On utilise dans cet article le complexe motivique $\Z(2)$ de faisceaux de
 cohomologie \'etale sur les vari\'et\'es lisses sur  un corps, tel qu'il a \'et\'e
d\'efini par  Lichtenbaum \cite{Lichtenbaum0, Lichtenbaum}.

    Les groupes de cohomologie \`a valeurs dans  le complexe
  $\Z(2)$  sont dans tout cet article
  les  groupes d'hypercohomologie \'etale. Ils sont not\'es $\bH^{i}(X,\Z(2))$.

  Sur un sch\'ema $X$, on note $H^{i}(X,{\mathcal K}_{j})$
 les groupes de cohomologie de Zariski \`a valeurs dans le faisceau ${\mathcal K}_{j}$
 sur $X$ associ\'e au pr\'efaisceau $U \mapsto K_{i}(H^0(U,\mathcal{O}_{X}))$, 
  o\`u la $K$-th\'eorie des anneaux est la $K$-th\'eorie de Quillen.

    \'Etant donn\'e un module galoisien $M$, c'est-\`a-dire un 
  $G$-module continu discret, on note tant\^ot $H^{i}(G,M)$ tant\^ot $H^{i}(F,M)$
  les groupes de cohomologie galoisienne \`a valeurs dans $M$.

 On note
  $\Q/\Z(2)$ le module galoisien $\colim_{n} \mu_{n}^{\otimes 2}$.

On note
$K_{3}F_{indec} : = \Coker [ K_{3}^{Milnor}F \to K_{3}^{Quillen}F].$
  
 On a les propri\'et\'es suivantes, cons\'equences de travaux de Merkurjev et Suslin \cite{MS},
de A. Suslin \cite{Suslin},  de M. Levine \cite{Levine}, de S. Lichtenbaum \cite{Lichtenbaum}, de B. Kahn  \cite{Kahn0}, \cite[Thm. 1.1, Lemme 1.4]{Kahn}.

  $\bH^{0}(F,\Z(2))=0$.
  
   $\bH^{1}(F,\Z(2))= K_{3}F_{indec}$.

    $\bH^{2}(F,\Z(2))=K_{2}F$.

     $\bH^{3}(F,\Z(2))=0$.

      $\bH^{i}(F,\Z(2))= H^{i-1}(F,\Q/\Z(2))$ si $i \geq 4$.

$\bH^{i}({\FF},\Z(2))=0$ si $i\neq 1,2$.

$\bH^{1}({\FF},\Z(2))=K_{3}({\FF})_{indec}$ est divisible, et sa torsion est  $\Q/\Z(2)$
(cf.~\cite[(1.2)]{Kahn0}).
 Il est donc extension d'un groupe uniquement divisible par
$\Q/\Z(2)$.

$\bH^2({\FF},\Z(2))=K_{2}({\FF})$ est  uniquement divisible.

\medskip

Soit $X$ une $F$-vari\'et\'e lisse g\'eom\'etriquement int\`egre, non n\'ecessairement projective. On a :

$\bH^0(X,\Z(2))=0.$

$\bH^1(X,\Z(2))=K_{3,indec}F(X).$

$\bH^1({\overline X},\Z(2))=K_{3,indec}{\FF}(X)$ 
  est extension d'un groupe uniquement divisible
par $\Q/\Z(2)$. Ceci r\'esulte de  la suite exacte \cite[(1.2)]{Kahn0}
et de \cite[Thm. 3.7]{Suslin}).

$\bH^2(X,\Z(2))=H^0(X,{\mathcal K}_{2}).$

$\bH^3(X,\Z(2))= H^1(X,{\mathcal K}_{2}).$ 

On  a  la suite exacte fondamentale (Lichtenbaum, Kahn \cite[Thm. 1.1]{Kahn})
\begin{equation}\label{fondam}
  0 \to CH^2(X) \to \bH^4(X,\Z(2)) \to H^3_{nr}(X,\Q/\Z(2)) \to 0 
\end{equation}
o\`u 
$$H^3_{nr}(X,\Q/\Z(2)) = H^0(X,{\mathcal H}^3(X,\Q/\Z(2)))$$
est le sous-groupe de $H^3(F(X),\Q/\Z(2)) $
form\'e des \'el\'ements non ramifi\'es en tout point de codimension 1 de $X$.

\medskip

 Pour toute $F$-vari\'et\'e   projective,  lisse et  g\'eom\'etriquement int\`egre $X$, dans l'article  \cite{CTRaskind}
 avec W. Raskind,  on a \'etabli  que les groupes $H^{0}({\overline X},{\mathcal K}_{2})$ et $H^{1}({\overline X},{\mathcal K}_{2})$
 sont chacun  extension d'un groupe fini  par un groupe divisible.
Si la  dimension cohomologique de $F$ satisfait $\cd(F) \leq i$, ceci implique que 
  les groupes de cohomologie galoisienne
 $H^{r}(G,H^{0}({\overline X},{\mathcal K}_{2}))$ et $H^{r}(G,H^{1}({\overline X},{\mathcal K}_{2}))$
 sont nuls   pour $r \geq i+1$.

 \medskip

On a une suite spectrale
 $$E_{2}^{pq} = H^p(G,\bH^q({\overline X},\Z(2))) \Longrightarrow \bH^n(X,\Z(2)).$$

 \begin{rema}\label{remarqueK2G}
 Pour $X=\Spec(F)$, compte tenu des identifications ci-dessus, cette suite spectrale
 donne une suite exacte
 $$  H^1(G,\Q/\Z(2)) \to K_{2}F \to K_{2}{\FF}^G \to H^2(G,\Q/\Z(2)) \to 0.$$
 Ceci est un cas particulier de \cite[Thm. 2.1]{Kahn0}.
 \end{rema}

En comparant la suite exacte fondamentale (\ref{fondam}) au niveau $F$ et au niveau~${\FF}$,
 en prenant les points fixes de $G$ agissant sur la suite
au niveau ${\FF}$, et en utilisant le lemme du serpent, on obtient :

\begin{prop}\label{longuesuitefondamentale}
Soit $X$ une $F$-vari\'et\'e lisse et g\'eom\'etriquement int\`egre.
Soit $\varphi: \bH^4(X,\Z(2)) \to \bH^4({\overline X},\Z(2))^G$.
On a alors  
une suite exacte
 $$0 \to \Ker[CH^2(X) \to CH^2({\overline X})^G] \to \Ker(\varphi) \to \hskip3cm $$
 $$ \Ker[H^3_{nr}(X,\Q/\Z(2))\to H^3_{nr}({\overline X},\Q/\Z(2))] \to
  $$ $$
 \hskip4cm \Coker[CH^2(X) \to CH^2({\overline X})^G] \to  \Coker(\varphi) .
 $$
 \end{prop}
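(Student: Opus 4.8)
The plan is to derive the sequence from the functoriality of the fundamental exact sequence \eqref{fondam} under the base change $\overline{X}=X\times_F\FF\to X$, followed by a single application of the snake lemma. First I would write \eqref{fondam} at the two levels. Over $F$ one has the short exact sequence
$$0 \to CH^2(X) \to \bH^4(X,\Z(2)) \to H^3_{nr}(X,\Q/\Z(2)) \to 0,$$
and over $\FF$ one has
$$0 \to CH^2(\overline{X}) \to \bH^4(\overline{X},\Z(2)) \to H^3_{nr}(\overline{X},\Q/\Z(2)) \to 0.$$
The pullback maps attached to $\overline{X}\to X$ furnish a morphism of short exact sequences from the first to the second, and these maps are $G$-equivariant for the natural action of $G=\Gal(\FF/F)$ on the $\FF$-level groups. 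I would then apply the left-exact functor $(-)^G=H^0(G,-)$ to the second sequence, obtaining the left-exact row
$$0 \to CH^2(\overline{X})^G \to \bH^4(\overline{X},\Z(2))^G \to H^3_{nr}(\overline{X},\Q/\Z(2))^G.$$

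Placing the (fully exact) $F$-sequence on top and this left-exact sequence on the bottom, with the three pullback maps as vertical arrows and $\varphi$ in the middle, gives a commutative ladder in exactly the shape required by the snake lemma: the top row is surjective at its right end and the bottom row is injective at its left end. The snake lemma then produces the exact sequence
$$0 \to \Ker(\alpha) \to \Ker(\varphi) \to \Ker(\beta) \to \Coker(\alpha) \to \Coker(\varphi),$$
where $\alpha\colon CH^2(X)\to CH^2(\overline{X})^G$ and $\beta\colon H^3_{nr}(X,\Q/\Z(2))\to H^3_{nr}(\overline{X},\Q/\Z(2))^G$ are the outer vertical maps; the initial $0$ reflects the injectivity of $CH^2(X)\hookrightarrow\bH^4(X,\Z(2))$ in the top row. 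Finally I would note that, since the pullback of any class in $H^3_{nr}(X,\Q/\Z(2))$ automatically lands in the $G$-invariants, the kernel of $\beta$ agrees with $\Ker[H^3_{nr}(X,\Q/\Z(2))\to H^3_{nr}(\overline{X},\Q/\Z(2))]$, which is the term displayed in the statement. Substituting the four outer terms yields the claimed sequence verbatim.

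The computation downstream of the diagram is purely formal, so the only step carrying genuine content is the commutativity of the comparison ladder, that is, the compatibility of \eqref{fondam} with pullback along $\overline{X}\to X$. This forces one to invoke the \emph{naturality} of the Lichtenbaum--Kahn sequence, not merely its existence; I expect this to be the main (though standard) point to verify, after which the snake-lemma chase is immediate.
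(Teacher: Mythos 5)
Your proposal is correct and coincides with the paper's own argument: the author derives Proposition \ref{longuesuitefondamentale} precisely by comparing the fundamental sequence (\ref{fondam}) over $F$ and over $\FF$, taking $G$-invariants of the latter, and applying the snake lemma to the resulting ladder. Your additional remarks — that the left-exactness of $(-)^G$ suffices for the five-term snake output, that $\Ker\beta$ equals the kernel into $H^3_{nr}({\overline X},\Q/\Z(2))$ itself, and that the only substantive point is the naturality of the Lichtenbaum--Kahn sequence under pullback — are exactly the (implicit) content of the paper's one-line proof.
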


Notons
\begin{equation}\label{N(X)}
  {\mathcal N}(X) : = 
\Ker \Big[ H^2(G, K_{2}({\FF}(X))  \to H^2(G,\bigoplus_{x \in {\overline X}^{(1)} }{\FF}(x)^{\times})  \Big]
\end{equation}

L'\'enonc\'e suivant est essentiellement \'etabli dans \cite{CTKahn}.

\begin{prop}\label{CTKahnsuite}
Soit $X$ une $F$-vari\'et\'e lisse et  g\'eom\'etriquement int\`egre.

(a) On a une suite exacte
$$H^3(F,\Q/\Z(2)) \to 
\Ker[ H^3_{nr}(X,\Q/\Z(2))  \to  H^3_{nr}({\overline X},\Q/\Z(2))] \to \hskip2cm$$ $$ \hskip3cm
  {\mathcal N}(X) \to \Ker[H^4(F,\Q/\Z(2)) \to H^4(F(X),\Q/\Z(2))].$$

(b) Si $X(F) \neq \emptyset $ ou si $\cd(F) \leq 3$, 
on a un isomorphisme  
$$\Ker[ H^3_{nr}(X,\Q/\Z(2))/H^3(F,\Q/\Z(2)) \to  H^3_{nr}({\overline X},\Q/\Z(2))]
\oi  {\mathcal N}(X).$$

(c) Si $X$ est de dimension au plus 2, on a
une suite exacte
$$H^3(F,\Q/\Z(2)) \to 
 H^3_{nr}(X,\Q/\Z(2) \to  {\mathcal N}(X) \to H^4(F,\Q/\Z(2)) \to H^4(F(X),\Q/\Z(2)).$$
\end{prop}

\begin{proof} L'\'enonc\'e (a) est  \cite[Prop. 6.1, Prop. 6.2]{CTKahn}.  L'\'enonc\'e (b) est
une cons\'equence facile  de (a).
La proposition 6.1 de \cite{CTKahn}
  montre aussi que, si $X$ est de dimension au plus 2, alors
le complexe
 $$ H^3(F,\Q/\Z(2) \to   \Ker[H^3_{nr}(X,\Q/\Z(2))  \to  H^3_{nr}({\overline X},\Q/\Z(2))]  \to  \hskip3cm$$$$ 
  \hskip3cm
 {\mathcal N}(X)
 \to H^4(F,\Q/\Z(2)) \to H^4(F(X),\Q/\Z(2))$$
 est une suite exacte
  $$ H^3(F,\Q/\Z(2) \to   H^3_{nr}(X,\Q/\Z(2))   \to  \hskip6cm $$$$ \hskip3cm {\mathcal N}(X)
 \to H^4(F,\Q/\Z(2)) \to H^4(F(X),\Q/\Z(2)).$$
  En effet les groupes $H^3(A_{s},\Q/\Z(2))$ intervenant dans la proposition~6.1 de \cite{CTKahn}
  sont alors nuls  : via la conjecture de Gersten, cela r\'esulte du fait que
le corps des fractions de $A_{s}$ est de dimension cohomologique 2,
si bien que le complexe de la proposition 6.1 de \cite{CTKahn}  est alors  exact.
\end{proof}

\section{Le cas o\`u le groupe $H^0({\overline X},{\mathcal K}_{2})$ est uniquement divisible}\label{uniquedivis}

Le but de ce pagragraphe est d'\'etablir la  proposition
\ref{longuesuitegenerale1}. On le fait  d'abord  
par une m\'ethode  ``$K$-th\'eorique''  (paragraphe \ref{methK})
qui  se pr\^ete plus aux calculs explicites
des fl\`eches intervenant dans les suites exactes. 
 La version ``motivique'' (paragraphe \ref{methM})
est plus souple quand il s'agit d'\'etudier la fonctorialit\'e en la $F$-vari\'et\'e  $X$
des suites concern\'ees.

\medskip

Dans ce paragraphe, on consid\`ere une $F$-vari\'et\'e
$X$ lisse et g\'eom\'e\-tri\-quement int\`egre, telle que le groupe  $H^0({\overline X},{\mathcal K}_{2})$ est uniquement divisible,
mais on ne suppose pas $X$ projective.

\subsection{M\'ethode  $K$-th\'eorique}\label{methK}

Pour $i\geq 1$, les fl\`eches naturelles 
$$ H^{i}(G,K_{2}{\FF}(X)) \to  H^{i}(G,K_{2}{\FF}(X)/K_{2}{\FF}) \to H^{i}(G,K_{2}{\FF}(X)/H^0({\overline X},{\mathcal K}_{2})   ) $$
sont  alors des isomorphismes.

D'apr\`es un th\'eor\`eme de Quillen (conjecture de Gersten pour la  $K$-th\'eorie),
le complexe
$$ K_{2}{\FF}(X) \to \bigoplus_{x\in {\overline X}^{(1)}} {\FF}(x)^{\times} \to \bigoplus_{x \in {\overline X}^{(2)}} \Z$$
est  le complexe des sections globales d'une r\'esolution flasque du faisceau ${\mathcal K}_{2}$
sur la ${\FF}$-vari\'et\'e lisse ${\overline X}$.

Ce complexe donne donc des suites exactes courtes de modules galoisiens
$$0 \to K_{2}{\FF}(X)/H^0({\overline X},{\mathcal K}_{2})Ê\to Z \to H^1({\overline X},{\mathcal K}_{2}) \to 0$$
$$0 \to Z \to \oplus_{x\in \X^{(1)}} {\FF}(x)^{\times}  \to I \to 0$$
$$0 \to I \to  \oplus_{x \in \X^{(2)}} \Z \to CH^2({\overline X}) \to 0.$$

En utilisant le th\'eor\`eme 90 de Hilbert et le lemme de Shapiro, 
le th\'eor\`eme de Merkurjev--Suslin et en particulier sa cons\'equence
  \cite[Thm.~1]{CTHilb} \cite[1.8]{Suslin}
$$K_{2}F(X)/K_{2}F = (K_{2}{\FF}(X)/K_{2}{\FF})^G,  $$
par des arguments classiques (cf. \cite{CTRaskind, CTVoisin}) de cohomologie galoisienne, 
 on obtient :

\begin{prop}\label{alancienne0}
Soit $X$ une $F$-vari\'et\'e lisse et  g\'eom\'etriquement int\`egre telle que le
groupe $H^0({\overline X},{\mathcal K}_{2})$ soit uniquement divisible.
Soit $ {\mathcal N}(X)$ comme en  (\ref{N(X)}).
On a alors une suite exacte 
$$ 0 \to H^1(X,{\mathcal K}_{2}) \to H^1({\overline X},{\mathcal K}_{2})^G \to \hskip5cm $$
$$ H^1(G, K_{2}{\FF}(X)) \to \Ker [CH^2(X) \to CH^2({\overline X})] \to $$$$
\hskip2cm H^1(G, H^1(\X,{\mathcal K}_{2})) \to {\mathcal N}(X) \to $$
$$\hskip3cm  \Coker[CH^2(X) \to CH^2({\overline X})^G]
\to H^2(G, H^1({\overline X},{\mathcal K}_{2})).$$
\end{prop}

 \medskip
  
 Pour  toute
 $F$-vari\'et\'e $X$  g\'eom\'etriquement int\`egre,
 un th\'eor\`eme de B.~Kahn  \cite[Cor. 2, p. 70]{Kahn0} donne un isomorphisme
$$ H^1(G, K_{2}{\FF}(X) ) \oi \Ker [H^3(F,\Q/\Z(2)) \to H^3(F(X),\Q/\Z(2))].$$

On a donc \'etabli :

\begin{prop}\label{alancienne}
Soit $X$ une $F$-vari\'et\'e lisse et g\'eom\'etriquement int\`egre telle que le
groupe $H^0({\overline X},{\mathcal K}_{2})$ soit uniquement divisible.
Soit $ {\mathcal N}(X)$ comme en  (\ref{N(X)}).
On a alors une suite exacte  
$$ 0 \to H^1(X,{\mathcal K}_{2}) \to H^1({\overline X},{\mathcal K}_{2})^G \to 
\hskip6cm $$
$$
\Ker [H^3(F,\Q/\Z(2)) \to H^3(F(X),\Q/\Z(2))] \to \hskip3cm $$$$
\hskip1cm
\Ker [CH^2(X) \to CH^2({\overline X})] \to  
H^1(G, H^1(\X,{\mathcal K}_{2})) \to {\mathcal N}(X) \to 
$$$$\hskip4cm
  \Coker[CH^2(X) \to CH^2({\overline X})^G]
\to H^2(G, H^1({\overline X},{\mathcal K}_{2})).$$
\end{prop}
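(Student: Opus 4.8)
Le plan est d'obtenir cette suite exacte \`a partir de celle de la proposition \ref{alancienne0} par une simple substitution de terme. On observe d'abord que les deux \'enonc\'es pr\'esentent des suites exactes ayant les m\^emes termes et les m\^emes fl\`eches de part et d'autre, \`a la seule exception du troisi\`eme terme : la proposition \ref{alancienne0} y place le groupe $H^1(G, K_{2}{\FF}(X))$, alors que l'on souhaite y lire $\Ker [H^3(F,\Q/\Z(2)) \to H^3(F(X),\Q/\Z(2))]$.

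Je commencerais donc par invoquer la proposition \ref{alancienne0}, valable sous la seule hypoth\`ese que $H^0({\overline X},{\mathcal K}_2)$ est uniquement divisible, qui fournit d\'ej\`a toute la suite exacte avec le groupe $H^1(G, K_{2}{\FF}(X))$. La partie substantielle du travail est ainsi d\'ej\`a faite : elle repose sur les suites exactes courtes de modules galoisiens d\'eduites de la r\'esolution flasque de Gersten--Quillen du faisceau ${\mathcal K}_2$ sur ${\overline X}$, combin\'ees au th\'eor\`eme 90 de Hilbert, au lemme de Shapiro et au th\'eor\`eme de Merkurjev--Suslin.

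La seconde \'etape consiste \`a appliquer le th\'eor\`eme de B. Kahn \cite[Cor. 2, p. 70]{Kahn0}, rappel\'e juste avant l'\'enonc\'e, qui donne un isomorphisme
$$ H^1(G, K_{2}{\FF}(X) ) \oi \Ker [H^3(F,\Q/\Z(2)) \to H^3(F(X),\Q/\Z(2))].$$
Il ne reste plus qu'\`a remplacer, dans la suite de la proposition \ref{alancienne0}, le groupe $H^1(G, K_{2}{\FF}(X))$ par ce groupe isomorphe, en composant les deux fl\`eches qui lui sont adjacentes (celle issue de $H^1({\overline X},{\mathcal K}_2)^G$ et celle \`a valeurs dans $\Ker[CH^2(X) \to CH^2({\overline X})]$) avec l'isomorphisme de Kahn. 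Le remplacement d'un terme par un terme isomorphe pr\'eservant l'exactitude en ce point, on obtient la suite exacte voulue.

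Il n'y a donc pas ici de v\'eritable obstacle : toute la difficult\'e est absorb\'ee par la proposition \ref{alancienne0} et par le th\'eor\`eme de Kahn. Le seul point \`a surveiller, de nature purement formelle, est la compatibilit\'e de l'isomorphisme de Kahn avec les fl\`eches de bord de la suite ; mais comme on s'en sert pr\'ecis\'ement pour transporter ces fl\`eches le long de l'isomorphisme, cette compatibilit\'e est automatique.
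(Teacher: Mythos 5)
Votre d\'emonstration co\"{\i}ncide avec celle du texte : la proposition \ref{alancienne} y est obtenue exactement ainsi, en substituant dans la suite exacte de la proposition \ref{alancienne0} le terme $H^1(G, K_{2}{\FF}(X))$ par le groupe $\Ker [H^3(F,\Q/\Z(2)) \to H^3(F(X),\Q/\Z(2))]$ via l'isomorphisme de Kahn \cite[Cor. 2, p. 70]{Kahn0}. Votre observation finale est juste : les fl\`eches adjacentes \'etant transport\'ees le long de l'isomorphisme, l'exactitude est pr\'eserv\'ee sans v\'erification suppl\'ementaire.
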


\begin{rema}
Soit $X$ un espace principal homog\`ene d'un $F$-groupe semisimple simplement connexe
absolument presque simple. On a $K_{2}(\overline{F}) =H^0({\overline X},{\mathcal K}_{2})$,
et ce groupe est donc uniquement divisible. 
On a par ailleurs
  $H^1(\overline{X},{\mathcal K}_{2})=\Z$
avec action triviale du groupe de Galois.
L'image de $1$  
par l'application $$H^1({\overline X},{\mathcal K}_{2})^G  \to H^3(F,\Q/\Z(2))$$
est (au signe pr\`es) l'invariant de Rost de $X$. Pour tout ceci, voir \cite[Part II, \S 6]{GMS}.
\end{rema}

En combinant  les propositions \ref{alancienne} et \ref{CTKahnsuite}
on trouve :

\begin{prop}\label{longuesuitegenerale1}
Soit $X$ une $F$-vari\'et\'e lisse et g\'eom\'etriquement int\`egre.
Supposons le groupe $H^0({\overline X},{\mathcal K}_{2})$ uniquement divisible.
Sous l'une des hypoth\`eses
$X(F) \neq \emptyset$ ou $\cd(F) \leq 3$, on a
une  suite exacte
 $$0  \to  H^1(X,{\mathcal K}_{2}) \to H^1({\overline X},{\mathcal K}_{2})^G \to  \hskip6cm$$ 
 $$  \Ker[H^3(F,\Q/\Z(2)) \to H^3(F(X),\Q/\Z(2))] \to  \hskip2cm $$$$
\hskip2cm   \Ker[CH^2(X) \to CH^2({\overline X})^G] \to 
    H^1(G,H^1({\overline X},{\mathcal K}_{2})) \to  \hskip2cm$$$$
    \hskip1cm 
    \Ker[H^3_{nr}(X,\Q/\Z(2))/H^3(F,\Q/\Z(2))
 \to H^3_{nr}({\overline X},\Q/\Z(2))]  \to
$$ $$  \hskip3cm
  \Coker[CH^2(X) \to CH^2({\overline X})^G] \to H^2(G,H^1({\overline X},{\mathcal K}_{2})).$$
  Sous l'hypoth\`ese $X(F) \neq \emptyset$, le groupe $  \Ker[H^3(F,\Q/\Z(2)) \to H^3(F(X),\Q/\Z(2))]$
  est nul.
 \end{prop}

\begin{rema}
Lorsque l'on suppose $K_{2}(\overline{F}) =H^0({\overline X},{\mathcal K}_{2})$ 
et $H^3_{nr}({\overline X},\Q/\Z(2))=0$, on retrouve l'\'enonc\'e de B. Kahn
\cite[Thm. 1, Corollaire]{Kahn}.
\end{rema}

 \subsection{M\'ethode  motivique}\label{motivique}\label{methM}

Toujours {\it sous l'hypoth\`ese
que le groupe  $H^0({\overline X},K_{2}) \simeq \bH^2({\overline X},\Z(2))$ est uniquement divisible}, \'etudions  la suite spectrale 
$$E_{2}^{pq} = H^p(G,\bH^q({\overline X},\Z(2))) \Longrightarrow \bH^n(X,\Z(2)).$$

La page $E_{2}^{pq}$ contient un certain nombre de z\'eros. Tous les termes $E_{2}^{p0}$ sont nuls.
 Comme   $ \bH^2({\overline X},\Z(2))$ est suppos\'e uniquement divisible,
 tous les termes $E_{2}^{p2}=H^p(G, \bH^2({\overline X},\Z(2)))$ pour $p \geq 1$ sont nuls.
 Les termes $E_{2}^{p1}$ sont \'egaux \`a $H^p(F,\Q/\Z(2))$ pour $p\geq 2$,
 groupe qui co\"{\i}ncide avec $H^{p+1}(F,\Z(2))$ pour $p\geq 3$. 
 La fl\`eche $E_{2}^{02} \to E_{2}^{21}$, soit
 $H^0({\overline X},{\mathcal K}_{2})^G \to H^2(F,\Q/\Z(2))$, est surjective, car il en est d\'ej\`a ainsi
 de $K_{2}{\FF}^G \to H^2(F,\Q/\Z(2))$ (Remarque \ref{remarqueK2G}).
  
 Notons comme ci-dessus $\varphi: \bH^4(X,\Z(2)) \to \bH^4({\overline X},\Z(2))^G$.
 L'analyse de la suite spectrale donne les \'enonc\'es suivants.
 
 \medskip
 
1) Il y a une suite exacte
 $$ 0  \to \bH^3(X,\Z(2)) \to  (\bH^3({\overline X},\Z(2))^G \to  \bH^4(F,\Z(2)) \to   \Ker(\varphi) \to $$
 $$  \hskip2cm H^1(G,H^1({\overline X},{\mathcal K}_{2})) \to 
 \Ker [\bH^5(F,\Z(2)) \to \bH^5(X,\Z(2))].$$
 
 Ainsi il y a une suite exacte
  $$ 0  \to H^1(X,\K_{2})  \to (H^1({\overline X},{\mathcal K}_{2}))^G \to  H^3(F,\Q/\Z(2)) \to \Ker(\varphi) \to  \hskip1cm $$
 $$   \hskip2cm  H^1(G,\bH^3({\overline X},\Z(2))) \to 
 \Ker[H^4(F,\Q/\Z(2)) \to H^4(F(X) ,\Q/\Z(2))].$$

 En particulier, si  $X(F) \neq \emptyset$ ou si l'on a  $\cd(F) \leq 3$,   alors on a une suite exacte
  $$ 0  \to H^1(X,{\mathcal K}_{2})  \to (H^1({\overline X},{\mathcal K}_{2}))^G \to  \hskip6cm$$
 $$  \hskip3cm H^3(F,\Q/\Z(2))  \to \Ker(\varphi)  \to H^1(G,H^1({\overline X},\K_{2})) \to 0.$$
 La fl\`eche $H^3(F,\Q/\Z(2)) \to \Ker(\varphi)$ est injective 
  si $X(F)\neq \emptyset$, ou si $H^3(F,\Q/\Z(2))$  est nul, par exemple si $\cd(F)\leq 2$.

 \medskip

2) Pour le conoyau de $\varphi$, on trouve une suite exacte
 $$ 0 \to D \to  \Coker(\varphi) \to  H^2(G,H^1({\overline X},{\mathcal K}_{2}))$$
 o\`u $D$ est un sous-quotient de 
 $\Ker [\bH^5(F,\Z(2)) \to \bH^5(X,\Z(2))].$
 Ce dernier groupe est nul si le noyau de
 $H^4(F,\Q/\Z(2))  \to H^4(F(X),\Q/\Z(2))$ est nul.
 En particulier $D=0$ si $X(F)\neq \emptyset$,
ou si
$\bH^5(F,\Z(2)) = H^4(F,\Q/\Z(2))$ est nul, par exemple si $\cd(F) \leq 3$.

 \medskip
 
 En utilisant la proposition \ref{longuesuitefondamentale}, on voit
que  pour  toute  $F$-vari\'et\'e $X$ lisse g\'eom\'etriquement int\`egre 
 avec $H^0({\overline X},{\mathcal K}_{2})$ uniquement divisible,
sous l'hypoth\`ese que  soit $X(F)\neq \emptyset$ soit  $\cd(F)\leq 3$, on a
une suite exacte
   $$0 \to \Ker[CH^2(X) \to CH^2({\overline X})^G] \to \Ker(\varphi) \to  \hskip3cm   $$$$\Ker[H^3_{nr}(X,\Q/\Z(2))\to H^3_{nr}({\overline X},\Q/\Z(2))] \to
$$ $$
\hskip3cm   \Coker[CH^2(X) \to CH^2({\overline X})^G] \to H^2(G,H^1({\overline X},{\mathcal K}_{2})).$$
 et une suite exacte
 $$ H^3(F,\Q/\Z(2)) \to \Ker(\varphi)  \to H^1(G,H^1({\overline X},{\mathcal K}_{2})) \to 0.$$

Si  l'on quotiente  les deux termes $\Ker(\varphi) \subset \bH^4(X,\Z(2))$ et $ H^3_{nr}(X,\Q/\Z(2))$
par l'image de  $ \bH^4(F,\Z(2)) \simeq  H^3(F,\Q/\Z(2))$,  
ce qui
par fonctorialit\'e de la suite spectrale
appliqu\'ee au morphisme structural
$X \to \Spec(F)$ induit une fl\`eche
$\Ker(\varphi)/ \bH^4(F,\Z(2)) \to H^3_{nr}(X,\Q/\Z(2))/H^3(F,\Q/\Z(2))$,
on trouve :

\begin{prop}\label{longuesuitegenerale2}
Soit $X$ une $F$-vari\'et\'e lisse et g\'eom\'etriquement int\`egre. Supposons
que   $H^0({\overline X},K_{2})$ est uniquement divisible.
Supposons en outre  que l'on a $X(F) \neq \emptyset$ ou $\cd(F) \leq 3$.
On a alors une suite exacte
   $$0  \to  H^1(X,{\mathcal K}_{2}) \to H^1({\overline X},{\mathcal K}_{2})^G \to  \hskip6cm $$$$ 
   \Ker[H^3(F,\Q/\Z(2)) \to H^3(F(X),\Q/\Z(2))]  \to  \hskip3cm$$$$
     \Ker[CH^2(X) \to CH^2({\overline X})^G] \to 
    H^1(G,H^1({\overline X},{\mathcal K}_{2})) \to $$$$ 
     \hskip3cm
 \Ker[H^3_{nr}(X,\Q/\Z(2))/H^3(F,\Q/\Z(2))
 \to H^3_{nr}({\overline X},\Q/\Z(2))] \to
$$ $$ \hskip6cm
  \Coker[CH^2(X) \to CH^2({\overline X})^G] \to H^2(G,H^1({\overline X},{\mathcal K}_{2})).$$
  Sous l'hypoth\`ese $X(F) \neq \emptyset$, le groupe $  \Ker[H^3(F,\Q/\Z(2)) \to H^3(F(X),\Q/\Z(2))]$
  est nul.
 \end{prop}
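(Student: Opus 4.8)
The plan is to derive the sequence from the motivic Hochschild--Serre spectral sequence
$$E_{2}^{pq} = H^p(G,\bH^q(\overline X,\Z(2))) \Longrightarrow \bH^n(X,\Z(2)),$$
exploiting the hypothesis that $\bH^2(\overline X,\Z(2))=H^0(\overline X,{\mathcal K}_2)$ is uniquely divisible. First I would record the vanishings on the $E_2$-page: $E_2^{p0}=0$ since $\bH^0(\overline X,\Z(2))=0$; $E_2^{p2}=0$ for $p\geq 1$ by unique divisibility; and $E_2^{p1}=H^p(F,\Q/\Z(2))$ for $p\geq 2$, while the differential $E_2^{02}\to E_2^{21}$, that is $H^0(\overline X,{\mathcal K}_2)^G\to H^2(F,\Q/\Z(2))$, is onto as recalled in \S1. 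These collapse the low-degree corner of the abutment.

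Next I would read off the filtrations on $\bH^3(X,\Z(2))=H^1(X,{\mathcal K}_2)$ and $\bH^4(X,\Z(2))$. The vanishings identify $H^1(X,{\mathcal K}_2)$ with $\Ker[\,d_3\colon H^1(\overline X,{\mathcal K}_2)^G\to H^3(F,\Q/\Z(2))\,]$ and present $\Ker(\varphi)$ as an extension of $E_\infty^{1,3}$ by $E_\infty^{3,1}$, giving the six-term sequence
$$0 \to H^1(X,{\mathcal K}_2) \to H^1(\overline X,{\mathcal K}_2)^G \to H^3(F,\Q/\Z(2)) \to \Ker(\varphi) \to H^1(G,H^1(\overline X,{\mathcal K}_2)) \to \Ker[\bH^5(F,\Z(2)) \to \bH^5(X,\Z(2))],$$
together with the companion sequence $0\to D\to\Coker(\varphi)\to H^2(G,H^1(\overline X,{\mathcal K}_2))$ in which $D$ is a subquotient of the same $\bH^5$-obstruction. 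I would then invoke the two hypotheses uniformly: a rational point splits the edge map $\bH^5(F,\Z(2))\to\bH^5(X,\Z(2))$, while $\cd(F)\leq 3$ forces $\bH^5(F,\Z(2))=H^4(F,\Q/\Z(2))=0$; in either case that kernel vanishes, so the first sequence terminates in $\to H^1(G,H^1(\overline X,{\mathcal K}_2))\to 0$, the relevant $d_3$ out of $E_3^{1,3}$ vanishes so that $E_\infty^{1,3}=H^1(G,H^1(\overline X,{\mathcal K}_2))$, and $D=0$.

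Then I would compare the fundamental sequence (\ref{fondam}) over $F$ with the $G$-invariants of its analogue over $\overline X$, exactly as in Proposition \ref{longuesuitefondamentale}, obtaining the five-term sequence linking $\Ker(\varphi)$, $\Coker(\varphi)$ and the kernel and cokernel of $CH^2(X)\to CH^2(\overline X)^G$, and feed in $\Coker(\varphi)\hookrightarrow H^2(G,H^1(\overline X,{\mathcal K}_2))$ from the companion sequence. Finally I would quotient both $\Ker(\varphi)\subset\bH^4(X,\Z(2))$ and $H^3_{nr}(X,\Q/\Z(2))$ by the image of $\bH^4(F,\Z(2))\simeq H^3(F,\Q/\Z(2))$; by functoriality of the spectral sequence along $X\to\Spec(F)$ these images are compatible, the one in $H^3_{nr}(X,\Q/\Z(2))$ being the constant classes, which vanish in $H^3_{nr}(\overline X,\Q/\Z(2))$. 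The quotient $\Ker(\varphi)/\bH^4(F,\Z(2))$ is precisely $E_\infty^{1,3}=H^1(G,H^1(\overline X,{\mathcal K}_2))$, and splicing the truncated sequences yields the asserted long exact sequence.

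The main obstacle will be the bookkeeping at the splice, and in particular the identification of the transgression image $\mathrm{im}(d_3)\subset H^3(F,\Q/\Z(2))$ with $\Ker[H^3(F,\Q/\Z(2))\to H^3(F(X),\Q/\Z(2))]$: this is what replaces the naive third term $H^3(F,\Q/\Z(2))$ by $\Ker[H^3(F,\Q/\Z(2))\to H^3(F(X),\Q/\Z(2))]$ and renders the map out of $H^1(\overline X,{\mathcal K}_2)^G$ exact. The inclusion is clear because transgressed classes restrict to zero at the generic point; exactness at $\Ker[CH^2(X)\to CH^2(\overline X)^G]$ and at $H^1(G,H^1(\overline X,{\mathcal K}_2))$ then reduces to the fact that the image of the constant classes meets the image of $CH^2(X)$ in $\bH^4(X,\Z(2))$ only in zero. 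Under $X(F)\neq\emptyset$ a rational point retracts $X$ onto $\Spec(F)$, so $H^3(F,\Q/\Z(2))\to H^3(F(X),\Q/\Z(2))$ is split injective and this term is zero, giving the last assertion; as a check, the resulting sequence coincides with the one produced by the $K$-theoretic route in Proposition \ref{longuesuitegenerale1}.
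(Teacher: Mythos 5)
Your overall route is precisely the paper's: the spectral sequence $E_2^{pq}=H^p(G,\bH^q(\overline X,\Z(2)))$, the vanishings forced by unique divisibility of $\bH^2(\overline X,\Z(2))$, the six-term sequence ending in $\Ker[\bH^5(F,\Z(2))\to\bH^5(X,\Z(2))]$, the vanishing of that obstruction and of $D$ under $X(F)\neq\emptyset$ or $\cd(F)\leq 3$, the comparison with la proposition \ref{longuesuitefondamentale}, and the quotient of $\Ker(\varphi)$ and $H^3_{nr}(X,\Q/\Z(2))$ by the constant classes. All of that matches \S 2.2. But the paragraph in which you locate the crux rests on two incorrect claims. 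First, the transgression image is \emph{not} identified with $\Ker[H^3(F,\Q/\Z(2))\to H^3(F(X),\Q/\Z(2))]$; only the inclusion you justify holds. What is true is $\mathrm{im}(d_3)=\Ker[\bH^4(F,\Z(2))\to \bH^4(X,\Z(2))]$, and the discrepancy between these two kernels is exactly the image of the arrow $\Ker[H^3(F,\Q/\Z(2))\to H^3(F(X),\Q/\Z(2))]\to \Ker[CH^2(X)\to CH^2(\overline X)^G]$ appearing in the statement: a constant class killed at the generic point need not die in $\bH^4(X,\Z(2))$; by the fundamental sequence (\ref{fondam}) it then lands in $CH^2(X)$, producing a torsion cycle class. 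If your identification held, that arrow of the proposition would vanish identically, which is neither asserted nor true in general (the remark following le th\'eor\`eme \ref{BMgeneralise} claims the corresponding injectivity only when $X(F)\neq\emptyset$ or $\cd(F)\leq 2$). Second, the \og fait \fg{} you invoke --- that the image of the constant classes meets the image of $CH^2(X)$ in $\bH^4(X,\Z(2))$ only in zero --- is false in general for the same reason, and fortunately it is not needed.

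The repair is routine bookkeeping rather than a new idea. Write $c:H^3(F,\Q/\Z(2))\to\Ker(\varphi)$ and $\pi:\Ker(\varphi)\to H^1(G,H^1(\overline X,\K_2))$ for the maps of your six-term sequence, and $r:\Ker(\varphi)\to \Ker[H^3_{nr}(X,\Q/\Z(2))\to H^3_{nr}(\overline X,\Q/\Z(2))]$ for the map of la proposition \ref{longuesuitefondamentale}; functoriality of (\ref{fondam}) along $X\to\Spec(F)$ gives that $r\circ c$ is the restriction of constant classes, whose kernel is $\Ker[H^3(F,\Q/\Z(2))\to H^3(F(X),\Q/\Z(2))]$ because $H^3_{nr}(X,\Q/\Z(2))\subset H^3(F(X),\Q/\Z(2))$. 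Exactness at $\Ker[CH^2(X)\to CH^2(\overline X)^G]$ is then a chase: if $y$ there satisfies $\pi(y)=0$, then $y=c(\alpha)$ for some $\alpha$; since $y\in CH^2(X)$ one has $r(y)=0$, hence $\alpha$ dies in $H^3(F(X),\Q/\Z(2))$ and $y$ comes from the preceding term. The spots at $H^1(G,H^1(\overline X,\K_2))$, at the quotiented unramified kernel, and at the cokernel term are handled by the same kind of chase using the exactness of your two input sequences; no intersection statement in $\bH^4(X,\Z(2))$ is required. With these corrections your argument coincides with the paper's proof, and your treatment of the last assertion (evaluation at a rational point kills $\Ker[H^3(F,\Q/\Z(2))\to H^3(F(X),\Q/\Z(2))]$) is fine.
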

  
  \begin{remas}
  
  (a) La d\'emonstration n'utilise ni le groupe  ${\mathcal N}(X)$  d\'efini en (\ref{N(X)})
   ni
  la  proposition \ref{CTKahnsuite}.
  
 (b)  L'\'enonc\'e de cette proposition est identique  \`a celui de la proposition
   \ref{longuesuitegenerale1}, mais il n'est pas clair a priori que les fl\`eches
   intervenant dans ces deux suites exactes co\"{\i}ncident.
  \end{remas}

\subsection{Comparaison entre les deux m\'ethodes}\label{comparaison}

Supposons   $H^0({\overline X},{\mathcal K}_{2})$ uniquement divisible.
On a une
  suite  exacte
$$\Ker [CH^2(X) \to CH^2({\overline X})]  \to
H^1(G, H^1(\X,{\mathcal K}_{2})) \buildrel{\rho} \over{\rightarrow} \hskip3cm $$
$$  \hskip6cm {\mathcal N}(X) \to  
  \Coker[CH^2(X) \to CH^2({\overline X})^G]$$
extraite de la proposition \ref{alancienne}, et utilis\'ee dans la d\'emonstration
de la proposition   \ref{longuesuitegenerale1}.
On a une
 suite exacte
$$\Ker [CH^2(X) \to CH^2({\overline X})] \to  
\Ker(\varphi)  \buildrel{\sigma} \over{\rightarrow}  \hskip6cm $$$$ \hskip1cm \Ker[H^3_{nr}(X,\Q/\Z(2)) 
 \to H^3_{nr}({\overline X},\Q/\Z(2))] \to
  \Coker[CH^2(X) \to CH^2({\overline X})^G]$$
 extraite de la proposition \ref{longuesuitefondamentale} 
 et utilis\'ee dans la d\'emonstration de la proposition
  \ref{longuesuitegenerale2}.
  Les termes de gauche et de droite dans ces deux suites exactes co\"{\i}ncident.
Sous r\'eserve de v\'erification des commutativit\'es des  diagrammes, 
sur tout corps
$F$ (sans condition de dimension cohomologique), le lien entre ces deux suites
est fourni par le diagramme  de suites exactes verticales

\[\xymatrix{
H^3(F,\Q/\Z(2)) \ar[d] \ar[r]^{=}  & H^3(F,\Q/\Z(2))  \ar[d]     \\
\Ker(\varphi)  \ar[r]^{\sigma}  \ar[d] & \Ker[H^3_{nr}(X,\Q/\Z(2))
\to H^3_{nr}({\overline X},\Q/\Z(2))] \ar[d] \\
H^1(G,H^1({\overline X},{\mathcal K}_{2})) \ar[d]  \ar[r]^{\rho}  & 
{\mathcal N}(X) \ar[d] \\
\Ker[H^4(F,\Q/\Z(2) \to
  H^4(F(X),\Q/\Z(2))]
   \ar[r]^{=}  &  \Ker[H^4(F,\Q/\Z(2) \to
  H^4(F(X),\Q/\Z(2))]
}\]
o\`u  la suite verticale de droite vaut pour toute $F$-vari\'et\'e lisse et  g\'eom\'e\-tri\-quement int\`egre $X$
(\cite{CTKahn}, voir   la proposition \ref{CTKahnsuite} ci-dessus),
et o\`u celle de gauche est \'etablie au d\'ebut de la section \ref{motivique}  pour les $F$-vari\'et\'es~$X$ telles que $H^0({\overline X},\K_{2})$
est uniquement divisible.

\subsection{Vari\'et\'es avec $H^0({\overline X},{\mathcal K}_{2})$ uniquement divisible}

 \subsubsection{Les espaces classifiants de groupes semisimples}\label{classifiants}
 Soit $H$ un $F$-groupe semisimple connexe,   soit $V$ une repr\'esentation lin\'eaire g\'en\'eriquement libre de $H$
 poss\'edant un ouvert $H$-stable
  $U \subset V$,  de compl\'ementaire un ferm\'e de codimension au moins 3 dans $V$,
  et que de plus l'on dispose d'une application quotient $U \to U/H $ qui soit un $H$-torseur.
  Soit $X:=U/H$. Soit $H_{sc}$ le rev\^etement simplement connexe de $H$
  et soit $C$ le noyau de l'isog\'enie $H_{sc} \to H$, puis $\hat{C}$ le module galoisien fini
  d\'efini par son groupe des caract\`eres. Comme le montre Merkurjev dans \cite[Thm. 5.3]{merk2},
  on a des identifications
  $$ K_{2}(\overline{F}) = H^0({\overline X},{\mathcal K}_{2})$$
  et
  $$\hat{C}(1):= {\rm Tor}_{1}^{\Z} (\hat{C}, \Q/\Z(1)) \simeq  H^1({\overline X},{\mathcal K}_{2}).$$
  Le groupe $ K_{2}(\overline{F})$ est uniquement divisible. La $F$-vari\'et\'e $X$ poss\`ede un point
  $F$-rationnel.
  
 La proposition \ref{longuesuitegenerale1} et la proposition  \ref{longuesuitegenerale2}
donnent donc chacune une suite exacte longue
  $$0  \to   
     \Ker[CH^2(X) \to CH^2({\overline X})^G] \to 
    H^1(G,\hat{C}(1)) \to \hskip2cm $$$$ 
     \hskip1cm
 \Ker[H^3_{nr}(X,\Q/\Z(2))/H^3(F,\Q/\Z(2))
 \to H^3_{nr}({\overline X},\Q/\Z(2))] \to
$$ $$ \hskip5cm
  \Coker[CH^2(X) \to CH^2({\overline X})^G] \to H^2(G,\hat{C}(1)).$$
  
Il serait int\'eressant de d\'eterminer le lien  entre la suite exacte \`a 5 termes 
obtenue par Merkurjev \cite[Thm. 3.9]{merk3} 
et les suites exactes \`a 5 termes  ci-dessus. Elles ont en commun
leurs deux premiers termes, et leur dernier terme.

\subsubsection{Vari\'et\'es projectives}

 Pour  ${\FF}$ un corps alg\'ebriquement clos -- toujours suppos\'e de caract\'eristique nulle --
 et $Y$ une ${\FF}$-vari\'et\'e int\`egre, projective et   lisse, les propri\'et\'es suivantes sont \'equivalentes :
 
 (i)  Le groupe de Picard $\Pic(Y)$ est sans torsion.
 
 (ii) Pour tout entier $n>0$, $H^1_{\et}(Y,\mu_{n})=0$.
 
 (iii)  $H^1(Y,\mathcal{O}_{Y})=0$ et le groupe de N\'eron--Severi $\NS(Y)$ est sans torsion.
 
 (iv) Le groupe $H^0(Y,{\mathcal K}_{2})$ est uniquement divisible.
 
 L'\'equivalence des trois premi\`eres propri\'et\'es est classique.
 Pour l'\'equivalence avec la quatri\`eme, voir
 \cite[Prop. 1.13]{CTRaskind},  qui s'appuie sur des r\'esultats
 de Merkurjev et de Suslin.
 
 \medskip
 
 Les propri\'et\'es ci-dessus sont satisfaites  par toute ${\FF}$-vari\'et\'e projective et lisse
g\'eom\'etriquement unirationnelle, mais aussi par toute surface $K3$ et par toute surface
projective et lisse dans l'espace projectif ${\bf P}^3$.

Pour une ${\FF}$-surface $Y$ projective et lisse satisfaisant ces propri\'et\'es, 
 la dualit\'e de Poincar\'e implique la nullit\'e
des groupes $H^3_{\et}(Y,\mu_{n})$ pour tout $n>0$. On sait (Bloch, Merkurjev--Suslin, cf. \cite[(2.1)]{CTRaskind})
que la  nullit\'e de ces  groupes
implique que le groupe de Chow  $CH^2(Y)$  n'a pas de torsion.

Pour une $F$-surface $X$ projective, lisse et  g\'eom\'etriquement  int\`egre telle que ${\overline X}$
satisfasse ces propri\'et\'es, le groupe
$\Ker[CH^2(X) \to CH^2({\overline X})]$ co\"{\i}ncide donc avec le sous-groupe
de torsion $CH^2(X)_{tors}$ de $CH^2(X)$.

\medskip

Sans hypoth\`ese suppl\'ementaire sur $X$, 
il est   difficile de contr\^oler le module galoisien $H^1({\overline X},K_{2})$ et
 l'application
$$CH^2(X)_{tors} = \Ker[CH^2(X) \to CH^2({\overline X})] \to H^1(G,H^1({\overline X},{\mathcal K}_{2})).$$
 Renvoyons ici le lecteur au d\'elicat  travail d'Asakura et Saito \cite{AS} qui \'etablit que pour
un corps $p$-adique $F$ et une 
surface lisse dans  $\P^3_{F}$, de degr\'e au moins 5   ``tr\`es g\'en\'erale'',
le   groupe $$CH^2(X)_{tors} \subset H^1(G,H^1({\overline X},{\mathcal K}_{2}))$$ est infini.

Au paragraphe suivant, on donnera des hypoth\`eses  restrictives
 permettant de facilement contr\^oler  le module $H^1({\overline X},{\mathcal K}_{2})$ et sa cohomologie galoisienne.

\section{Le module galoisien  $H^1({\overline X},{\mathcal K}_{2})$}\label{H1K2}

On consid\`ere la fl\`eche naturelle
$$ \Pic({\overline X})\otimes {\FF}^{\times} \to H^1({\overline X},{\mathcal K}_{2}).$$

\begin{prop}\label{H2Giso}
Soit  $X$ une $F$-vari\'et\'e projective, lisse et g\'eom\'etri\-que\-ment int\`egre.
Supposons $H^{2}(X,\mathcal{O}_{X})=0$   et supposons que
 les groupes $H^{3}_{\et}({\overline X},\Z_{\ell})$ sont sans torsion.
 Alors pour tout $i \geq 2$,  la fl\`eche 
$$H^{i}(G,\Pic({\overline X})\otimes {\FF}^{\times}) \to H^{i}(G,H^1({\overline X},{\mathcal K}_{2}))$$
est un isomorphisme.
\end{prop}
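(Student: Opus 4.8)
The plan is to reduce the statement to a purely formal property of the kernel and cokernel of the natural map. Write $P = \Pic({\overline X})\otimes {\FF}^{\times}$ and $H = H^{1}({\overline X},{\mathcal K}_{2})$, and let $c \colon P \to H$ be the natural map, with kernel $K$ and cokernel $C$. I would first record the elementary fact that if $M$ is a discrete $G$-module which is \emph{uniquely divisible} (equivalently, a $\Q$-vector space), then $H^{j}(G,M)=0$ for all $j\geq 1$: each $H^{j}(G/U,M^{U})$ for $U$ open normal and $j\ge 1$ is simultaneously a $\Q$-vector space and killed by $[G:U]$, hence zero, and one passes to the colimit. Granting that $K$ and $C$ are uniquely divisible, the proposition follows by dévissage: splitting $c$ as $0\to K\to P\to I\to 0$ and $0\to I\to H\to C\to 0$ with $I=\operatorname{im}(c)$, the vanishing $H^{j}(G,K)=0$ ($j\ge1$) gives $H^{i}(G,P)\iso H^{i}(G,I)$ for $i\ge1$, and the vanishing $H^{j}(G,C)=0$ ($j\ge1$) gives $H^{i}(G,I)\iso H^{i}(G,H)$ for $i\ge2$; composing yields the desired isomorphism for $i\geq 2$.

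It then remains to prove that $K$ and $C$ are uniquely divisible, and here the two hypotheses enter. I would treat divisibility and torsion-freeness separately. For divisibility: since ${\FF}^{\times}$ is divisible, so is $P$, hence $I=\operatorname{im}(c)$ is a divisible subgroup of $H$ and is therefore contained in $nH$ for every $n$; consequently $C/nC \cong H/nH$. Thus $C$ is divisible if and only if $H=H^{1}({\overline X},{\mathcal K}_{2})$ is divisible. Now by the structure theorem of \cite{CTRaskind}, $H^{1}({\overline X},{\mathcal K}_{2})$ is an extension of a finite group by a divisible group, and its finite part is governed by the torsion of $H^{3}_{\et}({\overline X},\Z_{\ell})$; the hypothesis that these groups are torsion-free therefore makes $H$ divisible, whence $C$ is divisible.

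For the torsion-freeness of $C$ and the unique divisibility of $K$, the key point is to show that $c$ induces an isomorphism on $n$-torsion for every $n$. A short diagram chase (using that $P$ is divisible, so $I/n=0$) shows that the injectivity of $c$ on $n$-torsion makes $K$ torsion-free, while its surjectivity on $n$-torsion makes both $K$ divisible and $C$ torsion-free. To analyse $c$ on $n$-torsion I would pass to the étale comparison: by Merkurjev--Suslin one has ${\mathcal K}_{2}/n \cong {\mathcal H}^{2}(\mu_{n}^{\otimes 2})$, and the Bloch--Ogus/coniveau machinery identifies the $n$-torsion of $H^{1}({\overline X},{\mathcal K}_{2})$ with a subquotient of $H^{2}_{\et}({\overline X},\mu_{n}^{\otimes 2})$. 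On the other side, ${}_{n}\bigl(\Pic({\overline X})\otimes{\FF}^{\times}\bigr)$ is built from $\Pic({\overline X})\otimes\mu_{n}$, i.e. the ``algebraic'' part of $H^{2}_{\et}({\overline X},\mu_{n}^{\otimes 2})$. The hypothesis $H^{2}(X,\mathcal{O}_{X})=0$ is exactly what kills the transcendental (Brauer) contribution to $H^{2}_{\et}$, forcing the comparison map $\Pic({\overline X})\otimes\mu_{n}\to {}_{n}H^{1}({\overline X},{\mathcal K}_{2})$ to be bijective; combined with the torsion-freeness used above this gives that $c$ is an isomorphism on all $n$-torsion.

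The main obstacle, and the step I would lean on \cite{CTRaskind} for rather than redo by hand, is precisely this last $n$-torsion comparison: one must match $c$ with the étale cycle/cup-product maps, identify the finite part of $H^{1}({\overline X},{\mathcal K}_{2})$ and the two sides of the $n$-torsion computation with the correct pieces of $H^{2}_{\et}$ and $H^{3}_{\et}$, and check that exactly the two stated hypotheses remove all torsion and non-divisibility --- then assemble the result over all $\ell$ and all $n$. The formal reduction of the first paragraph is routine; the arithmetic-geometric content is concentrated in verifying these compatibilities.
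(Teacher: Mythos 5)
Your proposal is correct and follows essentially the same route as the paper: the paper's proof consists precisely of invoking \cite[Thm. 2.12]{CTRaskind} to get that the Galois-equivariant map $\Pic({\overline X})\otimes {\FF}^{\times} \to H^1({\overline X},{\mathcal K}_{2})$ has uniquely divisible kernel and cokernel, and then concluding, exactly as in your first paragraph, that such a map induces isomorphisms on $H^{i}(G,\bullet)$ for $i \geq 2$. Your further sketch of the $n$-torsion comparison is just an outline of the content of the cited theorem, which the paper does not reprove either, so the two arguments coincide.
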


\begin{proof} D'apr\`es \cite[Thm. 2.12]{CTRaskind},  la fl\`eche  Galois \'equivariante
 $$ \Pic({\overline X})\otimes {\FF}^{\times} \to H^1({\overline X},{\mathcal K}_{2})$$
 a alors noyau et conoyau uniquement divisibles.
 Elle induit donc un isomorphisme sur $H^{i}(G,\bullet)$ pour $i \geq 2$.  \end{proof}

  \begin{rema}
L'hypoth\`ese que les groupes $H^{3}_{\et}({\overline X},\Z_{\ell})$  sont sans torsion
 est \'equivalente \`a l'hypoth\`ese que le groupe de Brauer $\Br({\overline X})$ est
 un groupe divisible. 
 \end{rema} 
 
\begin{prop}\label{H1Giso}
Soit $X$ une $F$-vari\'et\'e projective, lisse et  g\'eom\'e\-tri\-quement int\`egre.
Supposons qu'il existe  une courbe $V \subset X$ telle que sur un domaine universel $\Omega$
l'application $CH_{0}(V_{\Omega}) \to  CH_{0}(X_{\Omega})$ est surjective, et supposons
que les groupes $H^{3}_{\et}({\overline X},\Z_{\ell})$  sont sans torsion.
 Alors pour tout $i \geq 1$,  la fl\`eche 
$$H^{i}(G,\Pic({\overline X})\otimes {\FF}^{\times}) \to H^{i}(G,H^1({\overline X},{\mathcal K}_{2}))$$
est un isomorphisme.
\end{prop}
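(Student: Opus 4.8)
The plan is to follow the proof of Proposition \ref{H2Giso} verbatim in degrees $i\geq 2$ and to use the extra curve hypothesis only to settle degree $i=1$. Write
$$\rho\colon \Pic(\overline{X})\otimes \overline{F}^{\times} \longrightarrow H^1(\overline{X},\mathcal{K}_2)$$
for the Galois-equivariant map under consideration. First I would observe that the present hypothesis is stronger than that of Proposition \ref{H2Giso}: since $CH_0(V_{\Omega})\to CH_0(X_{\Omega})$ is surjective, $CH_0$ of $X$ is supported on the curve $V$, and the Bloch--Srinivas decomposition of the diagonal then forces $H^j(X,\mathcal{O}_X)=0$ for all $j\geq 2$, in particular $H^2(X,\mathcal{O}_X)=0$. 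Hence Proposition \ref{H2Giso} already applies: $\rho$ has uniquely divisible kernel and uniquely divisible cokernel by \cite[Thm. 2.12]{CTRaskind}, and the map in the statement is an isomorphism for $i\geq 2$.

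It remains to treat $i=1$. Recall that for a discrete $G$-module $M$ the groups $H^i(G,M)$ are torsion for $i\geq 1$; if $M$ is moreover uniquely divisible then so is $H^i(G,M)$, whence $H^i(G,M)=0$ for $i\geq 1$. Splitting $\rho$ into $0\to \Ker\rho\to \Pic(\overline{X})\otimes\overline{F}^{\times}\to \operatorname{im}\rho\to 0$ and $0\to \operatorname{im}\rho\to H^1(\overline{X},\mathcal{K}_2)\to \Coker\rho\to 0$, the uniquely divisible kernel gives $H^1(G,\Pic(\overline{X})\otimes\overline{F}^{\times})\iso H^1(G,\operatorname{im}\rho)$, and the only obstruction to an isomorphism $H^1(G,\operatorname{im}\rho)\to H^1(G,H^1(\overline{X},\mathcal{K}_2))$ is the connecting map out of $(\Coker\rho)^G$. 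Thus it suffices to prove $\Coker\rho=0$, that is, that $\rho$ is surjective.

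To obtain surjectivity I would again exploit the curve hypothesis through a Bloch--Srinivas decomposition $N\,\Delta_X=\Gamma_1+\Gamma_2$ in $CH^{\dim X}(X\times X)$, for some integer $N>0$, with $\Gamma_1$ supported on $X\times V$ and $\Gamma_2$ on $D\times X$ for a divisor $D\subset X$. Acting by correspondences on $H^1(\overline{X},\mathcal{K}_2)$ gives $N\cdot\mathrm{id}=(\Gamma_1)_*+(\Gamma_2)_*$. Both terms land in the decomposable subgroup $\operatorname{im}\rho$: the divisorial correspondence $(\Gamma_2)_*$ factors through a smooth model of $D$ and yields only products of divisor classes by units, while $(\Gamma_1)_*$, whose image is supported on the curve $V$, contributes nothing new in $H^1(\overline{X},\mathcal{K}_2)$ (a class supported in codimension $\geq 2$ vanishes there, and if $\dim X=2$ the curve $V$ is itself a divisor and again gives decomposable classes). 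Hence $N\cdot H^1(\overline{X},\mathcal{K}_2)\subseteq\operatorname{im}\rho$, so $\Coker\rho$ is annihilated by $N$; being also uniquely divisible, it vanishes. With $\rho$ surjective and $\Ker\rho$ uniquely divisible, the sequence $0\to\Ker\rho\to\Pic(\overline{X})\otimes\overline{F}^{\times}\to H^1(\overline{X},\mathcal{K}_2)\to 0$ yields the desired isomorphism $H^i(G,\Pic(\overline{X})\otimes\overline{F}^{\times})\iso H^i(G,H^1(\overline{X},\mathcal{K}_2))$ for every $i\geq 1$.

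The main obstacle is the vanishing $\Coker\rho=0$: one must justify that correspondences supported on a divisor, respectively on a curve, act on $\mathcal{K}_2$-cohomology with image in the decomposable subgroup $\operatorname{im}\rho$. This rests on the functoriality of Zariski $\mathcal{K}_2$-cohomology under algebraic correspondences and on the coniveau filtration on $H^1(\overline{X},\mathcal{K}_2)$; in practice this is precisely the case of $CH_0$ supported on a curve already analysed in \cite[Thm. 2.12]{CTRaskind}, which I would cite rather than redo the correspondence computation by hand.
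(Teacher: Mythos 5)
Your proof is correct and is essentially the paper's own: both rest entirely on \cite[Thm. 2.12]{CTRaskind}, which under exactly these hypotheses asserts that the Galois-equivariant map $\Pic({\overline X})\otimes{\FF}^{\times}\to H^1({\overline X},{\mathcal K}_{2})$ is \emph{surjective} with uniquely divisible kernel, whence the isomorphism on $H^{i}(G,\bullet)$ for all $i\geq 1$. Your detour through Proposition \ref{H2Giso} for $i\geq 2$ and the Bloch--Srinivas correspondence sketch for surjectivity in degree $1$ is sound but redundant, since you conclude by invoking the very same citation that the paper uses in a single step.
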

\begin{proof}
 D'apr\`es \cite[Thm. 2.12; Prop. 2.15]{CTRaskind},  sous les hypoth\`eses
de la proposition, 
 la fl\`eche  Galois-\'equivariante
 $$ \Pic({\overline X})\otimes {\FF}^{\times} \to H^1({\overline X},{\mathcal K}_{2})$$
 est surjective et 
 a un noyau uniquement divisible.
 Elle induit donc un isomorphisme sur $H^{i}(G,\bullet)$ pour $i \geq 1$. 
 \end{proof}

\begin{remas}
 
Rappelons que l'on suppose ${\rm car}(F)=0$.

 (a)   L'hypoth\`ese sur le groupe de Chow des z\'ero-cycles faite dans la proposition \ref{H1Giso}
 implique $H^{i}(X,\mathcal{O}_{X})=0$ pour $i \geq 2$. Elle implique que le groupe
 de Brauer $\Br({\overline X})$ est un groupe fini.
  Elle est satisfaite pour
 les vari\'et\'es ${\overline X}$ domin\'ees rationnellement par le produit d'une courbe et d'un espace projectif,
 en particulier elle est satisfaite pour les vari\'et\'es g\'eom\'etriquement unirationnelles.
 
 (b)   
 Sous les hypoth\`eses de la proposition \ref{H1Giso}, 
 on a    $\Br({\overline X})=0$.
  
 (c) Toutes les hypoth\`eses de la proposition \ref{H1Giso}  sont satisfaites pour une vari\'et\'e ${\overline X}$
 qui est    facteur direct birationnel d'une vari\'et\'e rationnelle.

\end{remas}

La proposition suivante (cf. \cite[Prop. 8.10]{CTVoisin}) s'applique  par exemple aux surfaces $K3$
sur $F$ corps de  fonctions d'une variable sur $\C$, ou sur $F=\C((t))$. 

\begin{prop}  Supposons le corps  $F$ de dimension cohomologique au plus 1. Soit $X$
une $F$-surface projective, lisse, g\'eom\'etriquement connexe, 
 satisfaisant  $H^1(X,O_{X})=0$. Supposons le groupe
  $\Pic({\overline X})=\NS({\overline X})$ sans torsion. On a alors un homomorphisme surjectif
$$ H^3_{nr}(X,\Q/\Z(2)) \to   \Coker[CH^2(X) \to CH^2({\overline X})^G].$$
Si l'indice $I(X)$ de $X$, qui est le pgcd des degr\'es sur $F$ des points ferm\'es de $X$,
n'est pas \'egal \`a 1, alors $H^3_{nr}(X,\Q/\Z(2)) \neq 0$.
\end{prop}

 \begin{proof}  Sous les  hypoth\`eses de la proposition,  le groupe $H^0({\overline X},{\mathcal K}_{2})$
 est uniquement divisible \cite[Cor. 1.12]{CTRaskind}. 
 Le groupe $H^1({\overline X},{\mathcal K}_{2})$ est extension d'un groupe fini par un groupe divisible
 \cite[Thm. 2.2]{CTRaskind} et donc $H^2(G, H^1({\overline X},{\mathcal K}_{2}))=0$.  Comme $X$ est une surface,
 on a $ H^3_{nr}({\overline X},\Q/\Z(2)) =0$.
  La surjection r\'esulte alors
 de la proposition \ref{longuesuitegenerale1} (ou de la proposition \ref{longuesuitegenerale2}). 
 Pour la surface $X$, on a la suite exacte de modules galoisiens
$$ 0 \to A_{0}({\overline X}) \to CH^2({\overline X}) \to \Z \to 0,$$
o\`u la fl\`eche $CH^2({\overline X}) \to \Z$ est donn\'ee par le degr\'e des z\'ero-cycles.
L'hypoth\`ese $ H^1(X,O_{X})=0$ implique que le groupe $ A_{0}({\overline X})$ est uniquement divisible
(th\'eor\`eme de Roitman). L'application induite
$CH^2({\overline X})^G \to \Z$ est donc surjective, et le
 groupe $\Coker[CH^2(X) \to CH^2({\overline X})^G]$
 a pour quotient le groupe $\Z/I(X)$.
 \end{proof}
 
 \begin{exemple}
Soit  $F=\C((t))$. Soient   $n>0$ un entier et  $X \subset \P^3_{F}$ la surface d\'efinie par l'\'equation
homog\`ene
 $$ x_{0}^n +tx_{1}^n +t^2x_{2}^n+ t^3x_{3}^n=0.$$
D'apr\`es  \cite[Prop. 4.4]{ELW}, pour  $n=4$ (surface $K3$) et pour $n$ premier \`a $6$,
on a  $I(X)\neq 1$. La proposition ci-dessus donne alors  $H^3_{nr}(X,\Q/\Z(2)) \neq 0$.
  \end{exemple}

\section{Vari\'et\'es \`a petit motif sur un corps non alg\'ebriquement clos}\label{principal}

Commen\c cons par un \'enonc\'e g\'en\'eral mais peut-\^etre un peu lourd.
 
\begin{theo}\label{BMgeneralise}
Soit $X$ une $F$-vari\'et\'e  projective, lisse et  g\'eo\-m\'etri\-que\-ment int\`egre.
 
Supposons satisfaites les   conditions :

(i) Sur un domaine universel $\Omega$, le degr\'e  $ CH_{0}(X_{\Omega}) \to \Z$
est un isomorphisme.

(ii) Le groupe $\Pic({\overline X})= \NS({\overline X})$  est sans torsion.

(iii)  Pour tout $\ell$ premier, le groupe $H^{3}_{\et}({\overline X},\Z_{\ell}) $ est
 sans torsion. 

(iv) On a au moins l'une des propri\'et\'es :  $X(F) \neq \emptyset$ ou  $\cd(F) \leq 3$.

Alors on a une suite exacte
  $$ \Ker[CH^2(X) \to CH^2({\overline X})^G] {\buildrel \alpha \over \longrightarrow} H^1(G,\Pic({\overline X}) \otimes {\FF}^{\times}) \to  \hskip3cm $$$$
   \Ker[ H^3_{nr}(X,\Q/\Z(2))/H^3(F,\Q/\Z(2)) \to  H^3_{nr}({\overline X},\Q/\Z(2))]   \to
$$$$  \hskip3cm  \Coker[CH^2(X) \to CH^2({\overline X})^G] 
{\buildrel \beta \over \longrightarrow}
  H^2(G,\Pic({\overline X}) \otimes {\FF}^{\times}).$$

Sous l'hypoth\`ese $X(F) \neq \emptyset$ ou $\cd(F)\leq 2$,
la fl\`eche $\alpha$
est injective.
 \end{theo}
 
 \begin{proof} Comme on a suppos\'e ${\rm car}(F)=0$,
l'hypoth\`ese (i)
 implique \cite{BS}
 que tous les groupes $H^{i}(X,\mathcal{O}_{X})$ pour $i \geq 1$
 sont nuls,  que   l'on a $\Pic({\overline X})= \NS({\overline X})$, et que le groupe de Brauer
 $\Br({\overline X})$ s'identifie au groupe fini  $ \oplus_{\ell}H^{3}({\overline X},\Z_{\ell})_{tors}$.
 Sous l'hypoth\`ese (i), l'hypoth\`ese (iii) est donc \'equivalente \`a  $\Br({\overline X})=0$.

Sous les hypoth\`eses (i) et (iii), la proposition \ref{H1Giso} donne 
$$H^{i}(G,\Pic({\overline X})\otimes {\FF}^{\times}) \oi H^{i}(G,H^1({\overline X},{\mathcal K}_{2}))$$
pour tout $i\geq 1$.

Sous les hypoth\`eses (i) et (ii), d'apr\`es \cite[Prop. 1.14]{CTRaskind},
on a $K_{2}{\FF}=H^0({\overline X},{\mathcal K}_{2})$. Le groupe $K_{2}{\FF}$ \'etant uniquement divisible,
on peut appliquer la Proposition \ref{longuesuitegenerale1}  (ou la proposition \ref{longuesuitegenerale2}).
\end{proof}

\begin{cor}\label{corollaireprincipal}\label{corprinc}
Soit $X$ une $F$-vari\'et\'e  projective, lisse et g\'eom\'etriquement int\`egre.

Supposons $X(F) \neq \emptyset$  ou $\cd(F) \leq 3$.

Supposons satisfaite  l'une des hypoth\`eses suivantes :

(i) la vari\'et\'e $\overline{X}$ est rationnelle;

(ii) la vari\'et\'e $\overline{X}$ est  rationnellement connexe, $\Br(\overline{X})=0$ et $H^{3}_{nr}({\overline X},\Q/\Z(2))=0$;

(iii) la vari\'et\'e $\overline{X}$ est de dimension 3, rationnellement connexe, et $\Br(\overline{X})=0$;

(iv) la vari\'et\'e $\overline{X}$ est de dimension 3, unirationnelle, et $\Br(\overline{X})=0$.

Alors on a une suite exacte
  $$ \Ker[CH^2(X) \to CH^2({\overline X})^G] {\buildrel \alpha \over \longrightarrow}  H^1(G,\Pic({\overline X}) \otimes {\FF}^{\times}) \to  \hskip3cm$$$$ 
   H^3_{nr}(X,\Q/\Z(2))/H^3(F,\Q/\Z(2)) \to $$
$$  \Coker[CH^2(X) \to CH^2({\overline X})^G]  {\buildrel \beta \over \longrightarrow}    H^2(G,\Pic({\overline X}) \otimes {\FF}^{\times}).$$
 
 Sous l'hypoth\`ese $X(F) \neq \emptyset$ ou $\cd(F)\leq 2$,
la fl\`eche  $\alpha$
est injective.
\end{cor}

\begin{proof} Le cas (iv) est un cas particulier du cas (iii). Sous l'hypoth\`ese (i), tous les groupes
$H^{i}_{nr}({\overline X},\Q/\Z(2))$ sont nuls pour $i\geq 1$. 
Pour $i=1$, cela \'etablit que $\Pic({\overline X})$ est sans torsion et donc  $\Pic({\overline X})= \NS({\overline X})$.
Pour $i=2$, cela \'etablit $\Br({\overline X})=0$ et donc  $H^{3}_{\et}({\overline X},\Z_{\ell}) _{tors}=0$ pour tout premier $\ell$.

Sous l'hypoth\`ese (iii), on
a $H^{3}_{nr}({\overline X},\Q/\Z(2))=0$. Cette annulation vaut  en effet  pour
tout solide unir\'egl\'e  \cite[Cor. 6.2]{CTVoisin},
c'est un corollaire 
d'un th\'eor\`eme de C. Voisin.

 L'\'enonc\'e est alors
une cons\'equence  imm\'ediate du th\'eor\`eme \ref{BMgeneralise}.
\end{proof}

\begin{remas}\label{lienBM}

(a)
Dans le cas particulier o\`u  $X$ est une $F$-compactification lisse \'equivariante d'un $F$-tore,
le corollaire \ref{corprinc}  est tr\`es proche d'un r\'esultat de Blinstein et Merkurjev (\cite[Prop. 5.9]{BM}).
Dans ce cas, le groupe $CH^2({\overline X})$ est sans torsion, le groupe
$$\Ker[CH^2(X) \to CH^2({\overline X})^G]$$ co\"{\i}ncide donc avec $CH^2(X)_{tors}$.
Par ailleurs, l'intersection des cycles
$$ \Pic({\overline X}) \times  \Pic({\overline X})  \to CH^2({\overline X})$$
induit une application naturelle surjective (\cite[\S 5.2, Proposition, p. 106]{Fulton})
$${\rm Sym}^2(\Pic({\overline X})) \to CH^2({\overline X}).$$

\medskip

(b) Soit  $X$  une $F$-compactification lisse   d'un $F$-tore.
La  fl\`eche $$H^1(G,\Pic({\overline X}) \otimes {\FF}^{\times}) \to   
   H^3_{nr}(X,\Q/\Z(2))/H^3(F,\Q/\Z(2)) $$
  intervient dans l'\'etude de l'approximation faible pour $X$ sur le corps $F$ des fonctions
  d'une courbe sur un corps $p$-adique (Harari, Scheiderer, Szamuely \cite[Thm. 4.2]{HSS}).
  Pour $X$ une $F$-compactification lisse d'un espace principal homog\`ene d'un
  $F$-tore, il conviendrait de comparer la fl\`eche
   $$H^1(G,\Pic({\overline X}) \otimes {\FF}^{\times}) \to   
   H^3_{nr}(X,\Q/\Z(2))/H^3(F,\Q/\Z(2)) $$
ici obtenue (le corps $F$ satisfaisant $cd(F) \leq 3$)  avec l'application (19)
utilis\'ee dans  \cite[Thm. 5.1]{HS}.

\medskip

(c) Soit $X/F$ une surface projective, lisse, g\'eom\'etriquement rationnelle poss\'edant un z\'ero-cycle de
degr\'e 1, et telle que le module galoisien $\Pic({\overline X})$ soit un facteur direct d'un module
de permutation. 
 Le corollaire ci-dessus implique alors  $H^3(F,\Q/\Z(2)) \oi H^3_{nr}(X,\Q/\Z(2) )$.
 C'est un cas particulier d'une remarque g\'en\'erale pour toute telle surface. Si le module galoisien $\Pic({\overline X})$
 est un facteur direct d'un module  
 de permutation,
 alors, d'apr\`es \cite[Prop. 4, p.~12]{CTHilb}, sur tout corps $L$ contenant $F$,
 l'application degr\'e $CH_{0}(X_{L}) \to \Z$ est un isomorphisme.  Ceci implique 
 $H^i(F,\Q/\Z(2)) \oi H^i_{nr}(X,\Q/\Z(2) )$ pour tout entier $i$ (cas particulier d'un th\'eor\`eme de Merkurjev,
 cf. \cite[Thm. 1.4]{ACTP}).
\medskip

(d) Dans l'article \cite{CTMadore} avec Madore, on a construit des exemples de corps~$F$ de dimension cohomologique 1
et de surfaces $X/F$ projectives, lisses, g\'eom\'etriquement rationnelles sans z\'ero-cycle de degr\'e 1.
Pour de telles surfaces, le corollaire \ref{corprinc}
ci-dessus donne   $$ H^3_{nr}(X,\Q/\Z(2)) = H^3_{nr}(X,\Q/\Z(2))/H^3(F,\Q/\Z(2))  \neq 0.$$
\end{remas}

(e) Pour $X$ une $F$-vari\'et\'e projective, lisse, g\'eom\'etriquement connexe quelconque, chacun des trois groupes suivants
est un invariant $F$-birationnel de $X$ : 

le groupe
$ \Ker[CH^2(X) \to CH^2({\overline X})^G] $,

le groupe
  $H^1(G,\Pic({\overline X}) \otimes {\FF}^{\times})$, 
  
  le groupe
$ H^3_{nr}(X,\Q/\Z(2)).$

Si la dimension cohomologique de $F$ est au plus 1, le conoyau $   \Coker[CH^2(X) \to CH^2({\overline X})^G]$ 
est un invariant $F$-birationnel, comme on voit en consid\'erant la situation de l'\'eclatement en une sous-vari\'et\'e
lisse. En g\'en\'eral,
le conoyau $     \Coker[CH^2(X) \to CH^2({\overline X})^G]$ n'est pas un invariant birationnel, comme on
peut voir en \'eclatant 
$\P^3_{F}$ en
 une $F$-conique lisse sans $F$-point.
 Ceci montre aussi que l'application
$$\beta :  \Coker[CH^2(X) \to CH^2({\overline X})^G]  \longrightarrow H^2(G,\Pic({\overline X}) \otimes {\FF}^{\times})$$
n'est pas toujours nulle.

\section{Vari\'et\'es \`a petit motif sur le corps des complexes}\label{hypcubiques}\label{petitmotif}

\subsection{Rappels}

Pour tout corps $F$ contenant $\C$, on note  $A^2(X_{F})$ le 
sous-groupe de $CH^2(X_{F})$
form\'e des classes de cycles qui sur une cl\^oture alg\'ebrique $\FF$ de $F$ sont alg\'ebriquement
\'equivalents \`a z\'ero.

La proposition suivante rassemble des r\'esultats connus, utiles pour la suite
de ce paragraphe.
  
\begin{prop}\label{rappelblochsri}
 Soit $X$
une vari\'et\'e connexe, projective et lisse sur le corps des complexes.
Supposons que l'application degr\'e $CH_{0}(X) \to \Z$ est un isomorphisme.

Alors

(i) On a $H^{i}(X,\mathcal{O}_{X})=0$ pour $i \geq 1$.

(ii) Pour tout corps $F$ contenant $\C$, les applications de restriction
  $$\Pic(X) \to \Pic(X_F) \to \Pic(X_{\overline F})$$
  sont des isomorphismes, et $\Pic(X)=\NS^1(X) =H^2_{Betti}(X,\Z)$.

(iii) \'Equivalence homologique et \'equivalence alg\'ebrique co\"{\i}ncident sur le 
groupe de Chow $CH^2(X)$. 

(iv) Le quotient $\NS^2(X):=CH^2(X)/A^2(X) \subset H^4_{Betti}(X,\Z)$ est un groupe ab\'elien de type fini.
Pour tout corps alg\'ebriquement clos $F$ contenant $\C$, on a $\NS^2(X) \oi \NS^2(X_{F})$.

(v) Il existe une vari\'et\'e ab\'elienne $B$ sur $\C$
 qui est un repr\'esentant alg\'ebrique
de $A^2(X)$, au sens de Murre  (\cite{murre}, cf.  \cite[D\'ef. 3.2]{Beauville}). 
Pour  tout corps   $F$ contenant $\C$, on a un 
homomorphisme $A^2(X_{F}) \to  B(F)$ fonctoriel en $F$, et cet homomorphisme
est un isomorphisme si $F$ est alg\'ebriquement clos.

(vi) S'il existe un premier $l$ avec $H^3_{Betti}(X,\Z/l)=0$,
alors $A^2(X)=0$,   on a une inclusion $CH^2(X) \hookrightarrow H^4_{Betti}(X,\Z)$,
et ces groupes sont sans $l$-torsion.

\end{prop}

\begin{proof}
Pour les \'enonc\'es (i), (iii), (iv), (v), dus  essentiellement \`a Bloch et Srinivas, et reposant
sur des th\'eor\`emes de Merkujev--Suslin et de Murre \cite{murre}, voir \cite[Thm. 1]{BS} et \cite{voisinlivre}.
L'\'enonc\'e (ii) est une cons\'equence connue de $H^1(X,\mathcal{O}_{X})=0$.
Le dernier \'enonc\'e de (iv) est une propri\'et\'e g\'en\'erale des quotients des groupes
de Chow modulo l'\'equivalence alg\'ebrique.
Pour l'\'enonc\'e (vi), les travaux de Bloch et de Merkurjev--Suslin
 montrent que
le sous-groupe   de $l$-torsion $CH^2(X)[l]$ de $CH^2(X)$ est un sous-quotient de $H^3_{Betti}(X,\Z/l)$.
On a donc  $CH^2(X)[l]=0$ et a fortiori $A^2(X)[l]=0$, donc $B[l]=0$,
donc la vari\'et\'e ab\'elienne $B$ est triviale et $A^2(X)=0$.  
\end{proof}

\begin{remas}
(a) Si $X$ est une vari\'et\'e rationnellement connexe, alors
  l'application degr\'e $CH_{0}(X) \to \Z$ est un isomorphisme, les propri\'et\'es (i) \`a (v) sont donc
  satisfaites.
  
(b)
Les \'enonc\'es (iii) \`a (v) valent sous l'hypoth\`ese plus faible  qu'il existe une courbe projective et lisse $C$ et un morphisme
$C \to X$ qui induise une surjection $CH_{0}(C) \to CH_{0}(X)$.
\end{remas}

\subsection{Cycle de codimension 2 universel}

Soit $F$ un corps, $X$ et $Y$ deux $F$-vari\'et\'es projectives, lisses, g\'eom\'etriquement connexes.
Soit $z \in CH^2(X\times_{F}Y)$ une classe de cycle de codimension 2.
La th\'eorie des correspondances  \cite{fultonchow} donne une application bilin\'eaire
$$ CH_{0}(Y) \times CH^2(Y\times_{F}X) \to CH^2(X).$$
Le sous-groupe $A_{0}(Y)$ des z\'ero-cycles de degr\'e 0
est form\'e de classes g\'eom\'etriquement alg\'ebriquement \'equivalentes \`a z\'ero.
Ainsi tout \'el\'ement  $z \in CH^2(Y\times_{F}X)$ d\'efinit  un homomorphisme
$$CH_{0}(Y) \to CH^2(X)$$
envoyant le groupe  $A_{0}(Y)$ dans le sous-groupe $A^2(X) \subset CH^2(X)$
d\'efini au d\'ebut du \S  \ref{petitmotif}. Cette application est fonctorielle en le corps de base $F$.
Via la fl\`eche \'evidente $Y(F) \to CH_{0}(Y)$ envoyant un point rationnel sur sa classe dans
le groupe de Chow, elle induit une application qui ne saurait \^etre qu'ensembliste
$$ Y(F) \to CH^2(X).$$ Si $Y$ est muni d'un point rationnel not\'e $O$,
en envoyant $P$ sur la classe de $P - O$, on d\'efinit une fl\`eche ensembliste
$$ \theta_{z} : Y(F) \to A^2(X)$$ envoyant $O$ sur $0$.

\medskip

Soient $X$ et $B$ comme dans la proposition \ref{rappelblochsri}.
On note $O$  l'\'el\'ement neutre de
 de $B(\C)$.
 La d\'efinition suivante est une variante de celle donn\'ee par Claire Voisin \cite[D\'ef. 0.5]{voisin}.
 
 \begin{defi}
Pour $X$ et $B$  comme ci-desssus, on dit 
qu'il existe un cycle de codimension 2 universel sur $X$
s'il existe un cycle $z \in CH^2(B \times X)$
tel que, sur tout corps $F$ contenant $\C$,
l' application
ensembliste 
$$\theta_{z} : B(F)  \to A^2(X_{F})$$
d\'efinie ci-dessus satisfasse la propri\'et\'e :
 
L'application compos\'ee
$$B(F) \to A^2(X_{F}) \to B(F)$$
est l'identit\'e sur $B(F)$.
\end{defi}

Le th\'eor\`eme ci-dessous est une variante d'un r\'esultat de C. Voisin \cite[Thm. 2.1, Cor. 2.3]{voisin}.
La d\'emonstration ici propos\'ee  diff\`ere sensiblement de celle donn\'ee   dans \cite{voisin}.

\begin{theo}\label{h3univtrivial}
Soit $X$
une vari\'et\'e  connexe, projective et lisse sur~$\C$.
Supposons  les conditions suivantes satisfaites.

(i) L'application degr\'e   $ CH_{0}(X) \to \Z$
est un isomorphisme. 

(ii) Les groupes $H^2_{Betti}(X,\Z)$ et $H^3_{Betti}(X,\Z)$ sont sans torsion.

(iii) On a $H^3_{nr}(X,\Q/\Z(2))=0$.

Alors :

(1) Pour tout corps $F$ contenant $\C$,  on a une suite exacte
 \[
 0 \to    H^3_{nr}(X_{F},\Q/\Z(2))/H^3(F,\Q/\Z(2))     \to  \hskip5cm \tag{5.2}\label{**} \]
 \[
 \Coker[CH^2(X_{F}) \to CH^2(X_{\overline F})^G] {\buildrel \beta \over \longrightarrow} H^2(G,\Pic(X_{\overline F}) \otimes {\FF}^{\times}).  \]

(2) Soit $B$ 
le repr\'esentant alg\'ebrique
de $A^2(X)$ (Prop. \ref{rappelblochsri} (v)).
S'il existe 
un cycle de codimension 2   universel  dans $CH^2(B \times X)$,
alors  pour tout corps $F$ contenant $\C$,
on a $H^3(F,\Q/\Z(2)) \oi H^3_{nr}(X_{F},\Q/\Z(2))$.
  \end{theo}
  
Note :   Sous l'hypoth\`ese $CH_{0}(X)=\Z$, la condition  $H^3_{nr}(X,\Q/\Z(2))=0$ est, d'apr\`es \cite[Thm. 1.1]{CTVoisin}, \'equivalente au fait
que la conjecture de Hodge enti\`ere vaut en degr\'e 4, i.e. pour les cycles de codimension 2.

 \begin{proof}
Soit $F$ un corps contenant $\C$.  
  Soit $\overline F$ une cl\^{o}ture alg\'ebrique de $F$ et $G=\Gal({\overline F}/F)$.
  D'apr\`es le th\'eor\`eme \ref{BMgeneralise} appliqu\'e \`a la $F$-vari\'et\'e $X_{F} : =X\times_{\C}F$,
on a une suite exacte
  $$H^1(G,\Pic(X_{\overline F}) \otimes {\FF}^{\times}) \to \hskip8cm $$$$ 
   \Ker[ H^3_{nr}(X_{F},\Q/\Z(2))/H^3(F,\Q/\Z(2)) \to  H^3_{nr}(X_{\overline F},\Q/\Z(2))]   \to
$$$$  \hskip3cm  \Coker[CH^2(X_{F}) \to CH^2(X_{\overline F})^G] {\buildrel \beta \over \longrightarrow} H^2(G,\Pic(X_{\overline F}) \otimes {\FF}^{\times}) $$

On sait \cite[Thm. 4.4.1]{santabarbara} que la cohomologie non ramifi\'ee est invariante
par extension de corps de base alg\'ebriquement clos. Sous l'hypoth\`ese (iii), on
a donc $H^3_{nr}(X_{\overline{F}},\Q/\Z(2))=0$.
Sous les hypoth\`eses (i) et (ii),
  les applications de restriction $\Pic(X) \to \Pic(X_{F}) \to \Pic(X_{\overline F})$ sont des isomorphismes
de r\'eseaux (Proposition  \ref{rappelblochsri} (ii)).
 L'action de
$\Gal({\overline F}/F)$ sur le r\'eseau $\Pic(X_{\overline F})$ est donc   triviale.
Le th\'eor\`eme 90 de Hilbert donne alors $$H^1(G,\Pic({\overline X}) \otimes {\FF}^{\times})=0.$$
Ceci donne la suite exacte (\ref{**}).

Supposons qu'il existe un cycle de codimension 2 universel. Alors, sur tout corps $F$ contenant $\C$,
on dispose de l'application ensembliste $B(F) \to A^2(X_{F})$ qui compos\'ee avec 
l'application $ A^2(X_{F}) \to B(F)$ est l'identit\'e.
Ceci implique que l'homomorphisme   $A^2(X_{F}) \to A^2(X_{\FF})^G$ est une surjection.
L'application compos\'ee  $\NS^2(X)  \to \NS^2(X_{\FF})^G$
est surjective, car $\NS^2(X) \to \NS^2(X_{\FF})$ est un isomorphisme (Prop. \ref{rappelblochsri} (iv)).
Ainsi
$CH^2(X) \to  CH^2(X_{\overline F})^G$ est surjectif, et de la suite exacte (\ref{**})  on
d\'eduit  $H^3(F,\Q/\Z(2))= H^3_{nr}(X_{F},\Q/\Z(2) ) $.
  \end{proof}

\begin{rema}
Sous des hypoth\`eses additionnelles, C. Voisin \cite[Thm. 2.1, Cor. 2.3]{voisin} \'etablit une r\'eciproque
  du th\'eor\`eme \ref{h3univtrivial}. Il serait souhaitable d'\'etablir une telle r\'eciproque par les m\'ethodes
  plus $K$-th\'eoriques du pr\'esent article, en utilisant la suite exacte (\ref{**})
  pour le corps des fonctions
  $F=\C(B)$ du repr\'esentant alg\'ebrique $B$ de $A^2(X)$.
\end{rema}

 \subsection{Troisi\`eme groupe de cohomologie non ramifi\'e des hypersurfaces de Fano}\label{hyperFano}

 \begin{theo}\label{hypersurface}
  Soit $n \geq 4$.
Soit $X \subset \P^n_{\C}$ une hypersurface lisse de degr\'e $d \leq n$.  

(i) La fl\`eche degr\'e $CH_{0}(X) \toÊ\Z$ est un isomorphisme.

(ii) On a $ \Pic(X) =\NS(X)= H^2_{Betti}(X,\Z)=\Z$, et ce groupe est
engendr\'e par la classe d'une section hyperplane.

(iii) Le groupe  $H^3_{Betti}(X, \Z)$  est sans torsion, et nul pour $n \geq 5$.

(iv) Pour  $n\geq 5$,   \'equivalences rationnelle, alg\'ebrique et homologique co\"{\i}ncident
sur les cycles de codimension 2 sur $X$, et
on a une injection de r\'eseaux  $CH^2(X) \hookrightarrow H^4_{Betti}(X, \Z)$.

(v) Pour   $n\neq  5$, $H^4_{Betti}(X, \Z)=\Z$, et l'application
  $$CH^2(X) \to H^4_{Betti}(X, \Z)=\Z$$est surjective, et
  est un isomorphisme pour $n > 5$.

(vi) Pour $n =4$ et $n>  5$, on a $H^3_{nr}(X,\Q/\Z(2))=0$.

   (vii)  Pour   $n\geq 5$, pour  tout corps $F$
 contenant $\C$, de cl\^oture alg\'ebrique $\FF$,
 avec $G:=\Gal(\FF/F)$,
  la fl\`eche naturelle
 $$CH^2(X_{F}) \to CH^2(X_{\FF})^G$$
   est surjective, et on a une suite exacte  naturellement scind\'ee
 $$ 0 \to H^3(F,\Q/\Z(2)) \to  
 H^3_{nr}(X_{F},\Q/\Z(2)) \to  H^3_{nr}(X_{\overline F},\Q/\Z(2)) \to 0.$$
 Pour $n > 5$,on a
 $$   H^3(F,\Q/\Z(2)) \oi H^3_{nr}(X_{F},\Q/\Z(2)).$$
 
 (viii) Pour $n=4$, soit
 $B$ le repr\'esentant alg\'ebrique de $A^2(X)$.
S'il existe un cycle universel de codimension 2 dans $CH^2(B \times X)$,
alors pour  tout corps $F$
 contenant $\C$, on a
$H^3(F,\Q/\Z(2)) \oi
 H^3_{nr}(X_{F},\Q/\Z(2))$,
   et l'application
$CH^2(X_{F}) \to CH^2(X_{\overline F})^G$ est   surjective.
\end{theo}

\begin{proof}

Les \'enonc\'es (i) \`a (v) sont bien connus. Comme ils sont utilis\'es pour \'etablir les points suivants,
donnons quelques rappels \`a leur sujet.

L'hypoth\`ese $d \leq n$ assure $CH_{0}(X)\oi\Z$, soit (i).  
C'est un th\'eor\`eme de Roitman, que l'on peut aussi voir comme un cas particulier
du th\'eor\`eme de Campana et  Koll\'ar-Miyaoka-Mori assurant qu'une vari\'et\'e de Fano est
 rationnellement connexe.  
L'\'enonc\'e (ii) vaut pour toute hypersurface lisse
dans $\P^n_{\C}$, $n \geq 4$.

Pour  $n\geq 5$,     les th\'eor\`emes de
Lefschetz donnent $H^3_{Betti}(X,\Z)=0$ et
 $H^3_{Betti}(X,\Z/l)=0$    pour tout $l$ premier.
L'\'enonc\'e (i) et  la proposition \ref{rappelblochsri} donnent  alors (iv).

 Pour $n=4$, $H^3_{Betti}(X,\Z)$ est sans torsion.
  Par ailleurs
 $H^4_{Betti}(X,\Z)=\Z$ (par dualit\'e de Poincar\'e),
 la restriction $\Z=H^4_{Betti}(\P^4,\Z) \to H^4_{Betti}(X,\Z)=\Z$
 est l'identit\'e sur $\Z$.
 
  Pour $n \geq 3$,  toute hypersurface $X \subset \P^n_{\C}$ de degr\'e $d \leq n$ contient une droite de $ \P^n_{\C}$.
C'est un r\'esultat classique mais d\'elicat  dans le cas $d=n$
(voir  \cite{Debarre}). Pour $d <n$,  cela r\'esulte d'un calcul  imm\'ediat de dimension, qui montre 
que  par tout point de $X$ il passe une droite de $ \P^n_{\C}$ contenue dans $X$.
 
 Soit $n=4$.  L'hypersurface $X$ contient une droite de $\P^4_{\C}$.
 La classe de cette droite dans  $CH^2(X)$ engendre donc
 $H^4_{Betti}(X,\Z)=\Z$.

Pour $n \geq 6$, les th\'eor\`emes de Lefschetz donnent que
  la fl\`eche de restriction $\Z= H^4_{Betti}(\P^n_{\C},\Z) \to H^4_{Betti}(X,\Z)$
est un isomorphisme.
Le diagramme commutatif
$$\begin{array}
{ccccccccc}
 {CH}^{2}(X) &   \hookrightarrow & H^4_{Betti}(X,\Z) \cr
\uparrow & & \uparrow \cr
{CH}^2(\P^n_{\C}) & \oi & H^4_{Betti}(\P^n_{\C},\Z)
\end{array}
$$
donne alors   $CH^2(X) \oi H^4_{Betti}(X,\Z) =\Z$,
la conjecture de Hodge enti\`ere en degr\'e 4 vaut donc pour $X$, et
  la th\'eorie de Bloch-Ogus ou     \cite[Thm. 1.1]{CTVoisin}
donnent alors $H^3_{nr}(X, \Q/\Z(2))=0$ soit (vi) pour $n \geq 6$.
Le m\^eme argument vaut pour $n=4$ et $d\leq 4$, puisque l'application 
$CH^2(X) \to  H^4_{Betti}(X,\Z) =\Z$ est surjective. Ceci \'etablit (v) et (vi).
Pour $n=4$,  (vi)  est un cas particulier 
d'un r\'esultat de C. Voisin \cite[Cor. 6.2]{CTVoisin}.

  \'Etablissons les points (vii) et (viii).

Pour tout $n \geq 4$,  Pour tout corps $F$ contenant~$\C$, on a
$$\Pic(X)=\Pic(X_{F})=\Z,$$
le groupe \'etant engendr\'e par la classe d'une section hyperplane (th\'eor\`eme de Max Noether).
On a donc  $H^1(G, \Pic(X_{\FF}) \otimes \FF^{\times})= H^1(G, \FF^{\times})=0$ (Thm. 90 de Hilbert).
Les \'enonc\'es d\'ej\`a \'etablis et le  le th\'eor\`eme \ref{BMgeneralise} donnent  alors une suite exacte
  \begin{equation}
  0 \to  H^3(F,\Q/\Z(2)) \to \Ker[H^3_{nr}(X_{F},\Q/\Z(2)) \to H^3_{nr}(X_{\FF},\Q/\Z(2))]\to  \label{***}
  \end{equation}
$$
  \hskip2cm  \Coker[CH^2(X_{F}) \to CH^2(X_{\overline F})^G]   {\buildrel  \beta \over \rightarrow}  H^2(G, \Pic(X_{\FF}) \otimes \FF^{\times}).
$$
 Pour $n \geq 5$,    d'apr\`es (iv),
  \'equivalence rationnelle et \'equivalence alg\'ebrique sur les cycles de codimension 2 de $X$
   co\"{\i}ncident sur un corps alg\'ebriquement clos.
Pour une vari\'et\'e projective,  lisse, connexe, sur un corps alg\'ebriquement clos,
les groupes d'\'equivalence de cycles modulo l'\'equivalence alg\'ebrique sont,
comme c'est bien connu et facile \`a \'etablir, invariants par extension du corps
de base \`a un autre corps alg\'ebriquement clos.
Ainsi  la fl\`eche compos\'ee $$ CH^2(X) \to CH^2(X_{F}) \to CH^2(X_{\overline F}) $$
  est l'identit\'e, donc  l'application $ CH^2(X_{F}) \to CH^2(X_{\overline F})^G$ 
  est surjective. On obtient donc dans ce cas une suite exacte
  $$0 \to  H^3(F,\Q/\Z(2)) \to H^3_{nr}(X_{F},\Q/\Z(2)) \to H^3_{nr}(X_{\FF},\Q/\Z(2)).$$
  D'apr\`es  \cite[Thm. 4.4.1]{santabarbara},  on a 
  $ H^3_{nr}(X,\Q/\Z(2))  \oi H^3_{nr}(X_{\FF},\Q/\Z(2))$, si  bien que la suite ci-dessus
  se  compl\`ete en une suite exacte naturellement scind\'ee
  $$0 \to  H^3(F,\Q/\Z(2)) \to H^3_{nr}(X_{F},\Q/\Z(2)) \to H^3_{nr}(X_{\FF},\Q/\Z(2)) \to 0.$$
   Pour $n > 5$, une application de (vi)  
   ach\`eve alors  d'\'etablir l'\'enonc\'e (vii).

   Pour $n=4$, on a d\'ej\`a \'etabli $H^3_{nr}(X,\Q/\Z(2))=0$ et
  $$H^3(F,\Q/\Z(2)) \oi H^3_{nr}(X_{F},\Q/\Z(2))$$ pour tout $F$ contenant $\C$.
  La deuxi\`eme partie de    l'\'enonc\'e (viii) r\'esulte alors
 de la suite exacte (\ref{***})  et
   du lemme \ref{restricdroite} (b) ci-apr\`es.
\end{proof}

 \begin{lem}\label{restricdroite}
 Soient  $n \geq 4$ et  $X \subset \P^n_{\C}$ une hypersurface
 lisse de degr\'e $d\leq n$.
 
 (a) Pour tout corps $F$ contenant $\C$, la fl\`eche naturelle
 $$\Pic(X_{\overline{F}}) \otimes \overline{F}^{\times}  \to    H^1(X_{\overline{F}} ,\K_{2}   )$$
 est un isomorphisme
 $$\overline{F}^{\times} \oi H^1(X_{\overline{F}} ,\K_{2}   ).$$
 
 (b) On a un isomorphisme  
  $$ \Ker [H^3_{nr}(X_{F},\Q/\Z(2))/H^3(F,\Q/\Z(2))  \to H^3_{nr}(X_{\FF},\Q/\Z(2)) ] \oi 
   \Coker[CH^2(X_{F}) \to CH^2(X_{\overline F})^G] .$$
  \end{lem}

\begin{proof}
 Pour tout corps $F$ contenant~$\C$, on a
$$\Pic(X)=\Pic(X_{F})=\Z,$$
le groupe \'etant engendr\'e par la classe d'une section hyperplane (th\'eor\`eme de Max Noether).
Comme on a  $CH_{0}(X)\oi\Z$ et que les groupes $H^3_{Betti}(X,\Z)$ sont sans torsion,
d'apr\`es  \cite[Thm. 2.12; Prop. 2.15]{CTRaskind},
la fl\`eche naturelle  
 $$\Pic(X_{\overline{F}}) \otimes \overline{F}^{\times}  \to    H^1(X_{\overline{F}} ,\K_{2}   )$$
est surjective.

On a vu ci-dessus que $X$ contient une droite de $\P^n$, soit 
$Y \subset X \subset \P^n_{\C}$.
 La restriction $$\Z=\Pic(X_{\overline{F}})  \to \Pic(Y_{\overline{F}})=\Z $$ est l'identit\'e sur $\Z$,
 car le groupe $\Pic(X_{\overline{F}}) $ est engendr\'e par la classe d'une section hyperplane.
 Donc la fl\`eche
$$\Pic(X_{\overline{F}}) \otimes \overline{F}^{\times} \to \Pic(Y_{\overline{F}}) \otimes \overline{F}^{\times}$$
est un isomorphisme.
 Pour la droite $Y$, l'application
$$\FF^{\times} = \Pic(Y_{\overline{F}}) \otimes \overline{F}^{\times} \to H^1(Y_{\overline{F}},\K_{2})$$
est un isomorphisme.
L'inclusion $Y \subset X$ induit un diagramme commutatif 
\[\xymatrix{
\Pic(X_{\overline{F}})\otimes  \overline{F}^{\times}
\ar[d]  \ar[r]   &   \Pic(Y_{\overline{F}})\otimes  \overline{F}^{\times} \ar[d]  \\
H^1(X_{\overline{F}},\K_{2})   \ar[r]  & H^1(Y_{\overline{F}},\K_{2})
}\]
 dans lequel la fl\`eche horizontale sup\'erieure est un isomorphisme,
la  fl\`eche verticale de droite aussi, et la  fl\`eche verticale de gauche est surjective.
La fl\`eche $\Pic(X_{\overline{F}})\otimes  \overline{F}^{\times} \to H^1(X_{\overline{F}},\K_{2})$
est donc un isomorphisme   $\overline{F}^{\times} \oi H^1(X_{\overline{F}},\K_{2})$,
ce qui \'etablit (a) et montre que la fl\`eche de restriction
$$H^1(X_{\overline{F}},\K_{2})  \to  H^1(Y_{\overline{F}},\K_{2})$$
est un isomorphisme.
 
Consid\'erons la suite exacte (\ref{***}).
 Pour $n \geq 5$, nous avons \'etabli $\Coker[CH^2(X_{F}) \to CH^2(X_{\overline F})^G] =0$,
 et donc  la fl\`eche
$$\beta :   \Coker[CH^2(X_{F}) \to CH^2(X_{\overline F})^G]   {\buildrel  \beta \over \rightarrow}  H^2(G, \Pic(X_{\FF}) \otimes \FF^{\times})$$
dans cette suite est nulle.
  
 Montrons que l'on a encore $\beta=0$ dans le cas $n=4$.
Nous avons ici recours au point de vue motivique,
i.e. \`a la proposition \ref{longuesuitegenerale2}.
L'application $\beta$ 
   est  
 induite par l'application compos\'ee
  $$CH^2(X_{\overline F})^G \to \bH^4(X_{\overline{F}},\Z(2))^G \to H^2(G, \bH^3(X_{\overline{F}},\Z(2))).$$
Chacune des deux applications  intervenant ici est d\'efinie pour toute vari\'et\'e lisse $X$,
et  leur formation est fonctorielle en la vari\'et\'e lisse $X$ : la seconde application 
vient de la suite spectrale consid\'er\'ee \`a la section \ref{motivique}.

 Soit $Y \subset X \subset \P^n_{\C} $  une droite. Comme
 la restriction
$$H^1(X_{\overline{F}},\K_{2}) \to H^1(Y_{\overline{F}},\K_{2})$$
est un isomorphisme, la fl\`eche $$ \beta : \Coker[CH^2(X_{F}) \to CH^2(X_{\overline F})^G]  \to H^2(G, H^1(X_{\overline F},\K_{2}))
  $$ se factorise par 
  $$\Coker[CH^2(Y_{F}) \to CH^2(Y_{\overline F})^G]=0$$ et  donc
est nulle,  ce qui via la suite exacte (\ref{***}) \'etablit l'\'enonc\'e (b).
 \end{proof}

Pour les  hypersurfaces cubiques, un r\'esultat de Claire Voisin permet
de compl\'eter le th\'eor\`eme \ref{hypersurface} dans le cas $n=5$.

\begin{theo}\label{h3nrcubiqueC}
Soit $X \subset \P^n_{\C}$, $n \geq 4$ une hypersurface cubique lisse.

(i) On  a  $H^3_{nr}(X,\Q/\Z(2))=0$.

(ii) Pour tout entier $n\geq 5$, 
   pour tout corps 
$F$  contenant $\C$, la fl\`eche
 $$ H^3(F,\Q/\Z(2)) \to H^3_{nr}(X_{F},\Q/\Z(2))$$
 est un isomorphisme, 
 et  l'application
 $$CH^2(X_{F}) \to CH^2(X_{\FF})^G$$
 est surjective.

(iii) 
 Pour $n=4$,  soit $B$ 
le repr\'esentant alg\'ebrique
de $A^2(X)$ (Prop. \ref{rappelblochsri} (v)).
S'il  existe
un cycle universel de codimension 2 dans $CH^2(B \times X)$,
alors  pour tout corps $F$ contenant $\C$
on a $H^3(F,\Q/\Z(2)) \oi H^3_{nr}(X_{F},\Q/\Z(2))$, et l'application
$CH^2(X_{F}) \to CH^2(X_{\overline F})^G$ est surjective.
\end{theo}

\begin{proof}
Pour $n \neq 5$, ceci est un cas particulier du th\'eor\`eme~\ref{hypersurface}.
Soit donc $n=5$. 
C.~Voisin 
a \'etabli  la conjecture de Hodge enti\`ere en degr\'e 4 pour  toute hypersurface cubique lisse $X \subset \P^5_{\C}$
\cite{voisinuniregle}, \cite[Thm. 0.4, Thm. 2.11]{voisindecdiag}.
D'apr\`es le th\'eor\`eme    \cite[Thm. 1.1]{CTVoisin},    ceci implique $H^3_{nr}(X,\Q/\Z(2))=0$,
et donc,  d'apr\`es \cite[Thm. 4.4.1]{santabarbara}, $H^3_{nr}(X_{\FF},\Q/\Z(2))=0$ pour tout corps alg\'ebriquement clos $\FF$
 contenant $\C$. L'\'enonc\'e (ii) est alors une cons\'equence du th\'eor\`eme 
 \ref{hypersurface} (vii).
  \end{proof}
  
  \begin{rema}
 Soit $n=5$.
 Si  l'hypersurface cubique $X \subset \P^5_{\C}$ contient un plan, on peut 
fibrer $X$ en quadriques au-dessus du plan. L'\'enonc\'e (i) r\'esulte alors de
 \cite[Cor. 8.2]{CTVoisin}, qui repose seulement sur
le calcul de la cohomologie non ramifi\'ee des quadriques de dimension 2
sur un corps quelconque (cas particulier des r\'esultats de Kahn, Rost, Sujatha sur les quadriques
de dimension quelconque).

Pour les {\it  hypersurfaces cubiques lisses  $X \subset \P^5_{\C}$  tr\`es g\'en\'erales
 contenant un plan}, l'isomorphisme
 $$ H^3(F,\Q/\Z(2)) \oi  H^3_{nr}(X_{F},\Q/\Z(2))$$
 dans la proposition \ref{h3nrcubiqueC}~(ii)  fut d'abord  \'etabli par des 
 m\'ethodes de $K$-th\'eorie et de formes quadratiques, en collaboration avec
  Auel et Parimala  \cite{ACTP}.
Il fut ensuite \'etabli  pour {\it  toute hypersurface cubique  lisse} $X \subset \P^5_{\C}$
par C.~Voisin  \cite[Thm. 2.1, Example 2.2]{voisin}, par une m\'ethode diff\'erente
de celle propos\'ee ici.

\end{rema}

 \begin{rema}
 Soit $n=4$.  Si pour une hypersurface cubique $X \subset \P^4_{\C}$ et
 un corps $F$ on  avait  $ H^3_{nr}(X_{F},\Q/\Z(2)) \neq H^3(F,\Q/\Z(2))$, alors
 $X$ ne serait pas stablement rationnelle. Un tel exemple n'est pas connu.
Dans     \cite[Thm. 4.5]{voisincubique},  
C. Voisin montre qu'il existe des hypersurfaces cubiques dans $\P^4_{\C}$
pour lesquelles
 le groupe de Chow des
z\'ero-cycles est universellement trivial, r\'esultat plus fort que
  $ H^3_{nr}(X_{F},\Q/\Z(2)) = H^3(F,\Q/\Z(2))$ pour tout $F$.

\end{rema}

\end{document}